\documentclass[12pt]{article}
\topmargin=-1cm \textheight=24cm \textwidth=16cm \oddsidemargin=-1cm
\usepackage{amsmath,amssymb}
\usepackage{amsthm}

\def\D{\mathbf{D}}

\def\E{\mathbf{E}}

\def\N{\mathbf{N}}

\def\P{\mathbf{P}}
\def\R{\mathbf{R}}

\def\1{\mathbf{1}}

\def\al{\alpha}
\def\be{\beta}
\def\pa{\partial}
\def\ep{\epsilon}
\def\de{\delta}
\def\ga{\gamma}

\newtheorem{prop}{Proposition}[section]
\newtheorem{theorem}{Theorem}[section]

\newtheorem{remark}{Remark}

\newcommand{\la}{\lambda}

\newcommand{\om}{\omega}
\newcommand{\Ga}{\Gamma}

\newcommand{\Om}{\Omega}

\begin{document}
\title{On fully mixed and multidimensional extensions of the Caputo and Riemann-Liouville derivatives,
 related Markov processes and fractional differential equations
}
\author{
Vassili N. Kolokoltsov\thanks{Department of Statistics, University of Warwick,
 Coventry CV4 7AL UK, and associate member  of IPI RAN RF, Email: v.kolokoltsov@warwick.ac.uk}}
\maketitle

\begin{abstract}
From the point of view of stochastic analysis the Caputo and Riemann-Liouville derivatives
of order $\al \in (0,2)$ can be viewed as (regularized) generators of stable L\'evy motions interrupted
on crossing a boundary. This interpretation
naturally suggests fully mixed, two-sided or even multidimensional generalizations
of these derivatives, as well as a probabilistic approach to the analysis of the related equations.
These extensions are introduced and some well-posedness results are obtained that generalize, simplify
 and unify lots of known facts. This probabilistic analysis leads one to study a
 class of Markov processes that can be constructed from any given Markov process in $\R^d$ by blocking
 (or interrupting) the jumps that attempt to cross certain closed set of 'check-points'.
\end{abstract}

{\bf Mathematics Subject Classification (2010)}: 34A08, 35S15, 60J50, 60J75
\smallskip\par\noindent
{\bf Key words}: Caputo fractional derivative, Riemann-Liouville fractional derivative, crossing a boundary,
boundary value problem, Markov processes

\section{Introduction}

For a general background in fractional calculus and fractional equations we refer to books
\cite{Dietbook}, \cite{KiSrTr06}, \cite{Podlub99},
\cite{PskhuRusBook}, see also survey \cite{MaiGo07}, 
for the crucial link with CTRW to \cite{Scal12}, \cite{Ko09} and \cite{MeerStr14},
and for the numerous applications in natural science to
\cite{TarBook11} and \cite{UchBook}.

The aim of this paper is to present a systematic treatment of a class of equations that include
fractional derivatives as very particular cases. Unlike the mostly analytic studies
of fractional differential equations (see the reference above), the present treatment and the corresponding far reaching
extensions of fractional derivatives are based on a probabilistic point of view.
This link with probability provides a powerful tool for the study of fractional equations.
It is also worth mentioning the recent activity on
proving probabilistic interpretation of solutions by analytic methods, see e.g. \cite{GorLuchSto},
while our approach provides such an interpretation as a starting point.

From this point of view the basic Caputo and Riemann-Liouville (RL) derivatives
of order $\al \in (0,2)$ can be viewed as (regularized) generators of stable L\'evy motions interrupted
on crossing a boundary. This interpretation
naturally suggests fully mixed, two-sided or even multidimensional generalizations
of these derivatives, as well as a probabilistic approach to the analysis of the related equations.
These extensions are introduced leading to well-posedness results that generalize, simplify
and unify lots of known facts. Some explicit solutions are also obtained.
The corresponding probabilistic analysis leads one to study an interesting general
class of Markov processes that can be constructed from any given Markov process in $\R^d$ by blocking
(or interrupting) the jumps that attempt to cross certain closed set of check-points.
This analysis is only initiated in the present work.
Further development, as well as the application of this technique to the study of
fractional in time and space diffusion equations and 
to the convergence of CTRW, will be discussed in
separate publications.

The paper is organized as follows. In the next preliminary section we introduce in a convenient form
the main objects of fractional calculus, the Caputo and RL derivatives.

In Section  \ref{secprointfrac} we explain in detail the probabilistic meaning of these derivatives
and their natural place in stochastic analysis leading, on the one hand side, to far reaching generalizations,
and on the other hand, to a unified treatment of various equations by powerful tools of stochastic analysis.
We distinguish the cases of $\be \in (0,1)$ and $\be \in (1,2)$, because in the first case
the Caputo derivative has a direct probabilistic interpretation and in the second an additional
regularization is needed. We also treat separately a multi-dimensional extension, as it includes one
additional ingredient, the projection of a jump on a boundary.

The final Section \ref{secwelposedfrac} initiates the rigorous theory of the equations
and processes introduced above by providing some basic examples (not at all exhaustive)
of well-posedness results and explicit solutions that
can be obtained by these tools, with main attention restricted to the analogs
of the derivatives of order $\be \le 1$.

Appendix derives some equivalent versions of basic fractional derivatives mentioned in
the next section without proof. These calculations should be obvious for specialists in fractional calculus and
are given here for completeness.

We shall denote by $\1_M$ the indicator function of a set $M$.

\section{Preliminaries: classical fractional derivatives}

For convenience we recall here the basic definitions of fractional derivatives and their equivalent
representations fitting our purposes.

We shall repeatedly use the Euler identity $\Ga (x)=(x-1)\Ga (x-1)$ for the
Euler Gamma-function whenever appropriate without mentioning it.

Due to the formula for the iterated Riemann integral
\begin{equation}
\label{eqdefRLiteratedint}
I_{a}^{n} f(x)=\frac{1}{(n-1)!}\int_a^x (x-t)^{n -1}f(t) dt,
\end{equation}
it is natural to extend it analytically, if $x>a$, to complex $n$ with positive real part,
leading to the following definition of the (right) fractional or Riemann-Liouville (RL)
integral of order $\be$ (with positive real part):
\begin{equation}
\label{eqdefRLint}
I_a^{\be}f(x)=I_{a+}^{\be} f(x)=\frac{1}{\Ga (\be)}\int_a^x (x-t)^{\be -1}f(t) dt.
\end{equation}
Notice that for $x<a$, we have negative numbers $(x-t)$ in power $n-1$, so that the corresponding extension
to complex (or even real) $n$ leading to the so-called left $RL$ integrals have some subtleties to be discussed later.

Noting that the derivation is the inverse operation to usual integration, the above definition
 suggests two notions of fractional derivative, the so-called
RL (right) derivatives of order $\be >0, \be \notin \N$:
\begin{equation}
\label{eqdefRLder}
D^{\be}_{a+}f(x)=\frac{d^n}{dx^n} I_{a+}^{n-\be}f(x)=\frac{1}{\Ga (n-\be)}\frac{d^n}{dx^n} \int_a^x (x-t)^{n-\be -1}f(t) dt, \quad x>a,
\end{equation}
and the so-called Caputo (right) derivative of order $\be >0, \be \notin \N$:
\begin{equation}
\label{eqdefCder}
D^{\be}_{a+\star}f(x)=I_a^{n-\be} \left[\frac{d^n}{dx^n}f\right](x)
=\frac{1}{\Ga (n-\be)}\int_a^x (x-t)^{n-\be -1}\left[\frac{d^n}{dt^n} f\right](t) dt, \quad x>a
\end{equation}
where $n$ is the maximal integer that is strictly less than $\be+1$.

Straightforward integration by parts (see Appendix) show that, for smooth enough $f$ and $\be \in (0,1)$, $x>a$,
\begin{equation}
\label{eqRLder0}
D^{\be}_{a+}f(x)
=\frac{1}{\Ga (-\be)} \int_0^{x-a}\frac{f(x-z)-f(x)}{z^{1+\be}}dz
+\frac{f(x)}{\Ga (1-\be) (x-a)^{\be}},
\end{equation}
\begin{equation}
\label{eqCder0}
D^{\be}_{a+\star}f(x)
=\frac{1}{\Ga (-\be)} \int_0^{x-a}\frac{f(x-z)-f(x)}{z^{1+\be}}dz
+\frac{f(x)-f(a)}{\Ga (1-\be) (x-a)^{\be}},
\end{equation}
implying
\begin{equation}
\label{eqRLCapder}
D^{\be}_{a+ \star}f(x)=D^{\be}_{a+}[f-f(a)](x)
=D^{\be}_{a+ }f(x)-\frac{f(a)}{\Ga (1-\be)|x-a|^{\be}}.
\end{equation}

In particular it follows that for smooth bounded integrable functions, the right RL and Caputo derivatives
 coincide for $a=-\infty$, $\be \in (0,1)$,
and one defines the fractional derivative in generator form as their common value:
\begin{equation}
\label{eqRLder0inf}
\frac{d^{\be}}{dx^{\be}}f(x)=D^{\be}_{-\infty+}f(x)=D^{\be}_{-\infty+\star}f(x)
=\frac{1}{\Ga (-\be)} \int_0^{\infty}\frac{f(x-z)-f(x)}{z^{1+\be}}dz.
\end{equation}

Analogously (see Appendix for detail), for smooth enough $f$ and $\be \in (1,2)$, $x>a$, one finds that
\begin{equation}
\label{eqRLder1}
D^{\be}_{a+}f(x)
=\frac{1}{\Ga (-\be)} \int_0^{x-a}\frac{f(x-z)-f(x)+f'(x)z}{z^{1+\be}}dz
+\frac{f(x)(x-a)^{-\be}}{\Ga (1-\be)}+\frac{\be f'(x)(x-a)^{1-\be}}{\Ga (2-\be)},
\end{equation}
\[
D^{\be}_{a+\star}f(x)
=\frac{1}{\Ga (-\be)} \int_0^{x-a}\frac{f(x-z)-f(x)+f'(x)z}{z^{1+\be}}dz
\]
\begin{equation}
\label{eqCder1}
+\frac{(f(x)-f(a))(x-a)^{-\be}}{\Ga (1-\be)}+\frac{(\be f'(x)-f'(a))(x-a)^{1-\be}}{\Ga (2-\be)},
\end{equation}
so that
\begin{equation}
\label{eqRLCder2}
D^{\be}_{a+\star}f(x)=D^{\be}_{a+}[f-f(a)-f'(a)(.-a)](x)
=D^{\be}_{a+}f(x)-\frac{f(a)(x-a)^{-\be}}{\Ga (1-\be)}-\frac{f'(a)(x-a)^{1-\be}}{\Ga (2-\be)}.
\end{equation}

Again for smooth bounded integrable functions, the right RL and Caputo derivatives
 coincide for $a=-\infty$, $\be \in (1,2)$,
and one defines the fractional derivative in generator form as their common value:
\begin{equation}
\label{eqRLder2}
\frac{d^{\be}}{dx^{\be}}f(x)=D^{\be}_{-\infty+}f(x)=D^{\be}_{-\infty+\star}f(x)
=\frac{1}{\Ga (-\be)} \int_0^{\infty}\frac{f(x-z)-f(x)+f'(x)z}{z^{1+\be}}dz.
\end{equation}

Turning to the left derivative notice that for $x<a$ formula \eqref{eqdefRLiteratedint}
rewrites as
\begin{equation}
\label{eqdefRLiteratedintleft}
I_{a}^{n} f(x)=\frac{(-1)^n}{(n-1)!}\int_x^a (t-x)^{n -1}f(t) dt,
\end{equation}
suggesting several possible normalizations for the analytic continuation in $n$
and the corresponding inversions (fractional derivatives).
We are interested only in the derivatives of order less than $2$.
For a unified probabilistic interpretation of these derivatives it is convenient to choose the left versions
of \eqref{eqdefRLder}, \eqref{eqdefCder} as follows.
For $\be \in (0,1)$:
\begin{equation}
\label{eqdefRLderleft1}
D^{\be}_{a-}f(x)=-\frac{1}{\Ga (1-\be)}\frac{d}{dx} \int_x^a (t-x)^{-\be}f(t) \, dt, \quad x<a,
\end{equation}
\begin{equation}
\label{eqdefCderleft1}
D^{\be}_{a-\star}f(x)=-\frac{1}{\Ga (1-\be)} \int_x^a (t-x)^{-\be}f'(t) \, dt, \quad x<a;
\end{equation}
for $\be \in (1,2)$:
\begin{equation}
\label{eqdefRLderleft2}
D^{\be}_{a-}f(x)=\frac{1}{\Ga (2-\be)}\frac{d^2}{dx^2} \int_x^a (t-x)^{1-\be}f(t) \, dt, \quad x<a,
\end{equation}
\begin{equation}
\label{eqdefCderleft2}
D^{\be}_{a-\star}f(x)=\frac{1}{\Ga (2-\be)}\int_x^a (t-x)^{1-\be}f''(t) \, dt, \quad x<a.
\end{equation}

When $\be \in (0,1)$ and $x<a$, similar calculations as for the right derivative (see \eqref{eqcalRLder})
 lead to the following analogs of
\eqref{eqRLder0}, \eqref{eqCder0}:
\begin{equation}
\label{eqRLder0left}
D^{\be}_{a-}f(x)
=\frac{1}{\Ga (-\be)} \int_0^{a-x}\frac{f(x+z)-f(x)}{z^{1+\be}}dz
+\frac{f(x)}{\Ga (1-\be) (a-x)^{\be}},
\end{equation}
\begin{equation}
\label{eqCder0left}
D^{\be}_{a-\star}f(x)
=\frac{1}{\Ga (-\be)} \int_0^{a-x}\frac{f(x+z)-f(x)}{z^{1+\be}}dz
+\frac{f(x)-f(a)}{\Ga (1-\be) (a-x)^{\be}},
\end{equation}
implying
\begin{equation}
\label{eqRLCapderleft}
D^{\be}_{a- \star}f(x)=D^{\be}_{a-}[f-f(a)](x)
=D^{\be}_{a- }f(x)-\frac{f(a)}{\Ga (1-\be)(a-x)^{\be}}.
\end{equation}

When $\be \in (1,2)$, $x<a$, one obtains
\begin{equation}
\label{eqRLder1left}
D^{\be}_{a-}f(x)
=\frac{1}{\Ga (-\be)} \int_0^{a-x}\frac{f(x+z)-f(x)-f'(x)z}{z^{1+\be}}dz
+\frac{f(x)(a-x)^{-\be}}{\Ga (1-\be)}-\frac{\be f'(x)(a-x)^{1-\be}}{\Ga (2-\be)},
\end{equation}
\[
D^{\be}_{a-\star}f(x)
=\frac{1}{\Ga (-\be)} \int_0^{a-x}\frac{f(x+z)-f(x)-f'(x)z}{z^{1+\be}}dz
\]
\begin{equation}
\label{eqCder1left}
+\frac{(f(x)-f(a))(a-x)^{-\be}}{\Ga (1-\be)}-\frac{(\be f'(x)-f'(a))(a-x)^{1-\be}}{\Ga (2-\be)},
\end{equation}
so that
\begin{equation}
\label{eqRLCder2left}
D^{\be}_{a-\star}f(x)=D^{\be}_{a-}[f-f(a)-f'(a)(.-a)](x)
=D^{\be}_{a-}f(x)-\frac{f(a)(a-x)^{-\be}}{\Ga (1-\be)}+\frac{f'(a)(a-x)^{1-\be}}{\Ga (2-\be)}.
\end{equation}

For smooth bounded integrable functions the left fractional derivatives in generator form become
\begin{equation}
\label{eqRLdergenleft1}
\frac{d^{\be}}{d(-x)^{\be}}f(x)=D^{\be}_{\infty-}f(x)=D^{\be}_{\infty-\star}f(x)
=\frac{1}{\Ga (-\be)} \int_0^{\infty}\frac{f(x+z)-f(x)}{z^{1+\be}}dz,
\end{equation}
\begin{equation}
\label{eqRLdergenleft2}
\frac{d^{\be}}{d(-x)^{\be}}f(x)=D^{\be}_{\infty-}f(x)=D^{\be}_{\infty-\star}f(x)
=\frac{1}{\Ga (-\be)} \int_0^{\infty}\frac{f(x+z)-f(x)-f'(x)z}{z^{1+\be}}dz,
\end{equation}
for $\be \in (0,1)$ and $\be \in (1,2)$ respectively.

It is straightforward to see that the pairs of operators \eqref{eqRLder0inf}, \eqref{eqRLdergenleft1}
and \eqref{eqRLder2}, \eqref{eqRLdergenleft2} are dual in the sense that
\[
\left(\frac{d^{\be}}{dx^{\be}}f, g\right) =\left(f, \frac{d^{\be}}{d(-x)^{\be}}g\right)
\]
for $\be\in (0,1)\cup (1,2)$ and sufficiently regular functions $f,g$, where the pairing
$(f,g)$ denotes of course the usual $L^2$-product: $(f,g)=\int f(x) g(x) dx$. This fact also
justifies the notation $d^{\be}/d(-x)^{\be}$, since for $\be=1$
the operators $d/dx$ and $-d/dx=d/d(-x)$ are dual.

\section{Probabilistic interpretation and extensions of RL and Caputo derivatives}
\label{secprointfrac}

\subsection{The case  $\be \in (0,1), d=1$}

It is well known that the operators $-d^{\be}/d(-x)^{\be}$ and  $d^{\be}/d(-x)^{\be}$
from \eqref{eqRLdergenleft1} and \eqref{eqRLdergenleft2} respectively are the
generators of stable L\'evy motions without negative jumps, the (annoying) discrepancy
 in the sign reflects the fact that $\Ga (-\be)<0$ for $\be \in (0,1)$ and $\Ga (-\be)>0$ for $\be\in (1,2)$.
  In particular, for $\be \in (0,1)$, the corresponding L\'evy process is a stable subordinator
  (an increasing process). Similarly, the  operators $-d^{\be}/dx^{\be}$ and  $d^{\be}/dx^{\be}$
  for $\be \in (0,1)$ and $\be \in (1,2)$ respectively
generate stable L\'evy motions without positive jumps, which can be obtained by inversion
of the processes generate by $-d^{\be}/d(-x)^{\be}$ and  $d^{\be}/d(-x)^{\be}$.

Let us now look specifically at the decreasing process $X(t)$ generated by $A=-d^{\be}/dx^{\be}$, $\be \in (0,1)$.
Let us modify it by forbidding it (interrupting on an attempt) to cross a boundary $x=a$ with an $a\in \R$,
that is, all jumps aimed to land to the left of the chosen barrier-point $a$ are forced to land exactly at $a$.
Analytically, this procedure means changing the generator $A$ to
\[
A_{a+\star}f(x)= -\frac{1}{\Ga (-\be)} \int_0^{x-a}\frac{f(x-z)-f(x)}{z^{1+\be}}\, dz
-\frac{1}{\Ga (-\be)} \int_{x-a}^{\infty}\frac{f(a)-f(x)}{z^{1+\be}}\, dz, \quad x>a,
\]
which rewrites as
\begin{equation}
\label{eqgeninterbeta01}
A_{a+\star}f(x)= -\frac{1}{\Ga (-\be)} \int_0^{x-a}\frac{f(x-z)-f(x)}{z^{1+\be}}\, dz
+\frac{f(a)-f(x)}{\Ga (1-\be)(x-a)^{\be}}\, dz, \quad x>a.
\end{equation}
In this expression we recognize the Caputo derivative \eqref{eqdefCder}, with inverted sign.
Killing the process at the boundary-point $x=a$ means analytically to set $f(a)=0$, in which case
\eqref{eqgeninterbeta01} turns to RL fractional derivative (with inverted sign).

We conclude that the transition from the free derivative $d^{\be}/dx^{\be}$ to the Caputo right derivative at $a$
is a particular case of the procedure of interrupting a decreasing process on crossing a boundary. Namely,
let an operator
\[
Af(x)= \int_0^{\infty} (f(x-y)-f(x)) \nu (x, dy)
\]
with a kernel $\nu(x, .)$ on $(0,\infty)$ such that
\begin{equation}
\label{eqLevycondboundedvar}
\sup_x \int_0^{\infty} \min (1, |y|) \nu (x, dy) <\infty
\end{equation}
generate a decreasing Feller process. Then the corresponding process interrupted (and hence stopped)
at a boundary $x=a$ has the generator
\begin{equation}
\label{eqgeninterbeta01genright}
A_{a+\star}f(x)= \int_0^{x-a}  (f(x-y)-f(x)) \nu (x, dy)
+(f(a)-f(x))\int_{x-a}^{\infty} \nu (x, dy), \quad x>a.
\end{equation}

Similarly,  the transition from the free derivative $d^{\be}/d(-x)^{\be}$ to the Caputo left derivative at $a$
is a particular case of the procedure of interrupting an increasing process on crossing a boundary, that is,
the transition from a process on $\R$ generated by
\[
Af(x)= \int_0^{\infty} (f(x+y)-f(x)) \nu (x, dy)
\]
to the process on the interval $(-\infty,a]$ generated by
\begin{equation}
\label{eqgeninterbeta01genleft}
A_{a-\star}f(x)= \int_0^{a-x}  (f(x+y)-f(x)) \nu (x, dy)
+(f(a)-f(x))\int_{a-x}^{\infty} \nu (x, dy), \quad x<a.
\end{equation}

From this point of view, the natural extension of this procedure to the general processes of bounded variation
generated by the operators
\begin{equation}
\label{eqgenerorderone}
Af(x)= \ga (x) f'(x)+ \int_{-\infty}^{\infty} (f(x+y)-f(x)) \nu (x, dy)
\end{equation}
with a kernel $\nu(x, .)$ on $\R\setminus \{0\}$ such that
\begin{equation}
\label{eqLevycondboundedvartwoside}
\sup_x \int_{-\infty}^{\infty} \min (1, |y|) \nu (x, dy) <\infty
\end{equation}
is a transition to the process in a given interval $[a,b]$
with arbitrary $-\infty \le a < b \le \infty$
with jumps interrupted on an attempt to cross the boundary (i.e. all jumps aiming to land outside $[a,b]$ are forced to
land on its nearest point), that is the process generated by the operator
\[
A_{[a,b]\star}f(x)= \ga (x) f'(x) +\int_{a-x}^{b-x}  (f(x+y)-f(x)) \nu (x, dy)
\]
\begin{equation}
\label{eqgeninterbeta01gen}
+(f(b)-f(x))\int_{b-x}^{\infty} \nu (x, dy)+ (f(a)-f(x))\int^{a-x}_{-\infty} \nu (x, dy), \quad x\in (a,b).
\end{equation}

To deal with fractional derivatives we are mostly interested in the motion inside $[a,b]$.
However, for the analysis it is convenient to be able to have the corresponding process
extended to all $\R$. This can be done in various ways, though two natural approaches
can be distinguished. In the first one all jumps are supposed to be restricted to jump in the direction of $D$ only,
for instance one can choose the extension of $A_{[a,b]\star}$ to all $x$ is given by the expression
\[
A_{[a,b]\star}f(x)= \ga (x) f'(x) +\1_{x< b}\int_0^{\infty}  (f[(x+y)\wedge b]-f(x)) \nu (x, dy)
\]
\begin{equation}
\label{eqgeninterbeta01gen1}
+\1_{x>a}\int_{-\infty}^0  (f[(x+y)\vee a]-f(x)) \nu (x, dy).
\end{equation}
In the second approach we stick to the idea of interruption on crossing the boundary, so that $D$ and its complement
are treated symmetrically. To present this approach in a proper generality assume a finite set $B=\{b_1 < \cdots < b_k\}$ or
a countable set $B=\{b_i, i\in \N\}$, with $b_i <b_{i+1}$ for any $i$, is chosen. For instance, in the above setting
$B$ is a two-point set $B=\{a,b\}$. For any $x\in \R$ let us now define $b_+(x)$ and $b_-(x)$ as the nearest point
of $B$ to the right and to the left of $x$ respectively ($x$ excluded in both cases even if $x\in B$).
Then the modification of the process on $\R$ generated by \eqref{eqgenerorderone}, with jumps interrupted
 on crossing $B$ (think of $B$ as a set of road blocks or check points placed to control the free motion given by $A$)
 can be specified by the generator
\begin{equation}
\label{eqgeninterbeta01genext}
A_{B\star}f(x)= \ga (x) f'(x) + \int_{-\infty}^{\infty}  (f([(x+y)\wedge b_+(x)]\vee b_-(x))-f(x)) \nu (x, dy).
\end{equation}
Clearly for $x\in (b_i,b_{i+1})$ this generator coincides with \eqref{eqgeninterbeta01gen},
where $a=b_i, b=b_{i+1}$. In order to study the fractional differential equations in intervals
we can choose to work with the processes given by \eqref{eqgeninterbeta01gen1} or \eqref{eqgeninterbeta01genext},
as their behavior until they reach the boundary $\pa D$ is identical. For more general domains $D$
these different extensions can lead of course to different problems.


In \cite{Ko10}, \cite{Ko11} the author referred to generators of type \eqref{eqgenerorderone}
as to the generators of order at most one aiming to stress that they can be considered as fully mixed
fractional derivatives of order not exceeding one.

If we kill the process generated by $A$ at the boundary, the generator \eqref{eqgeninterbeta01gen} turns to
\begin{equation}
\label{eqgeninterbeta01genkil}
A_{[a,b]}f(x)= \int_{a-x}^{b-x}  (f(x+y)-f(x)) \nu (x, dy)
-f(x)\left[ \int_{b-x}^{\infty} \nu (x, dy)+ \int^{a-x}_{-\infty} \nu (x, dy)\right],
\end{equation}
for $x\in (a,b)$, which represents the corresponding extension of RL fractional derivative.

Therefore, from probabilistic point of view, the natural extension of the
basic linear fractional equation
\begin{equation}
\label{eqbaslinfraceq}
D^{\be}_{a+\star}f(x)=-\la f(x), \quad x>a,
\end{equation}
with initial condition $f_a=f(a)$ and $\la >0$ given, is the problem of
finding $f$ on $[a,\infty)$ satisfying
\[
A_{a+\star}f(x)= \la f(x), \quad f(a)=f_a,
\]
with $A_{a+\star}$ given by \eqref{eqgeninterbeta01genright}, that is the equation
\begin{equation}
\label{eqgeninterbeta01genrightlineq}
\int_0^{x-a}  (f(x-y)-f(x)) \nu (x, dy)
+(f(a)-f(x))\int_{x-a}^{\infty} \nu (x, dy)=\la f(x), \quad f(a)=f_a.
\end{equation}
This equation includes the extensions of \eqref{eqbaslinfraceq} with various
mixed fractional derivatives, that is, the problem
\begin{equation}
\label{eqixderlin}
\sum_j \om_j D^{\be_j}_{a+\star}f(x)=-\la f(x), \quad f(a)=f_a,
\end{equation}
with some finite collection of numbers $\om_j>0$ and $\be_j\in (0,1)$
or even more exotic versions with $\om_j$ or $\be_j$ being functions of $x$.
Mixed derivatives and related fractional equations are  actively studied recently by analytical methods,
see e. g. \cite{EideKoch04}, \cite{PskhuRusBook}, \cite{Pskhu11}, \cite{HiLo12}.

Moreover, it is now natural to formulate the two-sided version of \eqref{eqbaslinfraceq}
as the equation
\begin{equation}
\label{eqbaslinfraceqtwoside}
D^{\be}_{a+\star}f(x)+D^{\be}_{b-\star}f(x)=-\la f(x), \quad f(a)=f_a, f(b)=f_b
\end{equation}
on the interval $x\in [a,b]$, which represents the simplest case of a more general equation
\begin{equation}
\label{eqbaslinfracest0}
A_{[a,b]\star}f(x)=\la f(x), \quad f(a)=f_a, f(b)=f_b
\end{equation}
with $A_{[a,b]\star}$ given by \eqref{eqgeninterbeta01gen1}. In particular, the mixed-derivatives
extension of \eqref{eqbaslinfraceqtwoside} is the problem
\begin{equation}
\label{eqixderlintwoside}
-\sum_{j=1}^k \om_j D^{\be_j}_{a+\star}f(x)-\sum_{j=1}^k \ga_j D^{\be_j}_{b-\star}f(x)=\la f(x), \quad f(a)=f_a, f(b)=f_b,
\end{equation}
with some collection of numbers (or, more generally, functions) $\om_j>0, \ga_j>0, \be_j\in (0,1)$ and $\la >0$
extending for instance the problem considered in \cite{GarGuiMaiPag}.
The well-posedness for this problem is covered by Theorem \ref{theorderwelpos1} below.

As solutions to the equation $D_{a+\star}f(x)=g(x)$ with a given $g$ and initial condition $f(a)=a$
can be given in terms of fractional integrals, the two-sided analog of fractional integral becomes
the solution to the problem
\begin{equation}
\label{eqtwosidedfracint}
D^{\be}_{a+\star}f(x)+D^{\be}_{b-\star}f(x)=-g(x), \quad f(a)=f_a, f(b)=f_b,
\end{equation}
with given $f_a,f_b$ and $g$ on $[a,b]$, which again represents the simplest case of a more general problem
\begin{equation}
\label{eqbaslinfracest}
A_{[a,b]\star}f(x)=g(x), \quad f(a)=f_a, f(b)=f_b,
\end{equation}
or with the linear term included
\begin{equation}
\label{eqbaslinfracestlin}
A_{[a,b]\star}f(x)=\la f(x)+ g(x), \quad f(a)=f_a, f(b)=f_b,
\end{equation}
suggesting a new class of extensions of the notion of fractional integral,
which is alternative to a more classical one, see \cite{Kir14}, which is  
based on the variations of special functions used as the kernels 
(which is natural from an analytic perspective).

\begin{remark}
\label{remRLClink}
Notice that equation \eqref{eqbaslinfracestlin} can be rewritten in terms of RL derivative. Namely, let $v(x)$ be a smooth
function on $[a,b]$ with $v(a)=f_a, v(b)=f_b$. Then for the function $\phi=f-v$ problem \eqref{eqbaslinfracestlin} rewrites as
\begin{equation}
\label{eqbaslinfracestlin1}
A_{[a,b]\star}\phi =\la \phi+ g - A_{[a,b]\star}v +\la v, \quad \phi(a)=\phi(b)=0,
\end{equation}
which is equivalent to
\begin{equation}
\label{eqbaslinfracestlin2}
A_{[a,b]}\phi =\la \phi+ \tilde g, \quad \phi(a)=\phi(b)=0, \quad \tilde g= g+(\la - A_{[a,b]\star})v.
\end{equation}
\end{remark}

Particular examples of boundary value problems with fractional derivatives are actively studied, mostly
by analytical techniques, see e. g. \cite{Kadir14}, \cite{KiSrTr06} or \cite{MeerNaVe09} and references therein,
for distributed or mixed derivatives see also \cite{MaiMuPaGo}, \cite{GorLuchSto} and \cite{UmGor}.

\subsection{The case  $\be \in (0,1), d>1$}

Let us now turn to the multidimensional extension of this interruption procedure.
The analog of RL derivative arising from a process in $\R^d$ and a domain $D\subset \R^d$ is the generator of
 the process killed on leaving $D$. For Caputo version this is more subtle, as we have to specify
a point where a jump crosses the boundary. Below we choose the most natural model assuming that a trajectory
of a jump follows  shortest path (a straight line in Euclidean case).
Suppose $A$ is a generator of a Feller process $X_t(x)$ in $\R^d$ with the generator of type
\begin{equation}
\label{eqbasorder1d}
Af(x)= (\ga (x), \nabla) f(x) +\int_{\R^d} (f(x+y)-f(x)) \nu (x, dy)
\end{equation}
with a kernel $\nu(x, .)$ on $\R^d\setminus \{0\}$ such that
\begin{equation}
\label{eqLevycondboundedvartwoside}
\sup_x \int_{\R^d} \min (1, |y|) \nu (x, dy) <\infty,
\end{equation}
that is, in the terminology of \cite{Ko10}, \cite{Ko11}, a generator or order at most one.

Let $D$ be an open convex subset of $\R^d$ with boundary $\pa D$ and closure $\bar D$.
For $x\in \R^d$ and a unit vector $e$ let $L_{x,e}=\{x+\la e, \la \ge 0\}$ be the ray drawn from
$x$ in the direction $e$. For $x\in \R^d$, let
\[
D(x)=\{y\in \R^d : L_{x,y/|y|}\cap \bar D\neq \emptyset \}.
\]
In particular,
$D(x)=\bar D$ for all $x\in D$. Furthermore, for $y\in D(x)$, let
\[
\la (x,y/|y|)=\max \{ R>0: x+R y/|y| \in \bar D\}.
\]

Let us introduce the restriction function $R_D(x,y)$, $x\in \R^d$, $y\in D(x)$, by the formula
\begin{equation}
\label{eqdefprojonbound}
R_D(x,y) =\left\{
\begin{aligned}
& x+y, \quad \text{if} \, |y| \le \la (x, y/|y|) \\
& x+\la (x, y/|y|)y/|y|, \quad \text{if} \,  |y| \ge \la (x, y/|y|)
\end{aligned}
\right.
\end{equation}
The process $X_t(x)$ with jumps interrupted on crossing $\pa D$ when jumping from inside $D$
and forced to jump in the direction of $\bar D$ when jumping from outside of $D$
can be defined by the generator
\begin{equation}
\label{eqgeninterbeta01genmultdimC}
A_{D\star}f(x)=  (\ga (x), \nabla) f(x)+ \int_{D(x)}  [f(R_D(x,y))-f(x)] \nu (x, dy),
\end{equation}
which represents a multidimensional extension of the Caputo boundary operator in $D$
arising from \eqref{eqgeninterbeta01gen1}.

To define a multidimensional analog of \eqref{eqgeninterbeta01genext} let us assume
more generally that $\tilde D$ is an arbitrary open subset of $\R^d$,
so that $B=\R^d\setminus \tilde D$ is closed. Let us define
\[
\tilde \la (x,y/|y|)=\min \{ R >0: x+R y/|y| \in B\}
\]
for any $y$, with the convention that $\tilde \la (x,y/|y|)=\infty$ if the ray $L_{x,y/|y|}$ does not intersect $B$ at all,
and let the projection-on-the-boundary function $\tilde R_B(x,y)$ be defined for all $x,y\in \R^d$
by the same formula \eqref{eqdefprojonbound},
but with $\tilde \la$ instead of $\la$.
Then the analog of \eqref{eqgeninterbeta01genext}, that is
the modification of the process on $\R^d$ generated by \eqref{eqbasorder1d}, obtained
by interrupting jumps on an attempt to cross $B$, is specified by the generator
\begin{equation}
\label{eqgeninterbeta01genmultdimC1}
A_{\tilde D\star}f(x)=  (\ga (x), \nabla) f(x)+ \int_{\R^d}  [f(\tilde R_D(x,y))-f(x)] \nu (x, dy).
\end{equation}
For a convex domain $D$ the operators \eqref{eqgeninterbeta01genmultdimC} and
\eqref{eqgeninterbeta01genmultdimC1} with $B=\pa D$ and $\tilde D=\R^d\setminus B$
 coincide for $x\in D$, so that when one is interested
in the random motion inside $\bar D$ one can work with either of the processes generated
by  \eqref{eqgeninterbeta01genmultdimC} or \eqref{eqgeninterbeta01genmultdimC1}
and stopped on the boundary of $D$.

As pointed out above, the process $X_t(x)$ killed on the boundary,
which represents a multidimensional extension of the RL boundary operator in $D$
arising from \eqref{eqbaslinfracestlin2}, is specified by the generator
\begin{equation}
\label{eqgeninterbeta01genmultdimRL}
A_{D}f(x)= \int_{\R^d}  [f(x+y)\1_{x+y \in \bar D}-f(x)] \nu (x, dy), \quad x\in D.
\end{equation}

\begin{remark}
In dimension $d=1$ the projection $z\to R_{(a,b)}(x,z)=[z\wedge a]\vee b$
of the real numbers to the interval $[a,b]$ (used in \eqref{eqgeninterbeta01gen1})
does not depend on $x$ and is clearly the most natural one. In higher dimensions
one can imagine several reasonable extensions. Our choice used above was meant to
 preserve the direction of a jump. Another reasonable choice could be the definition of the
 projection $R_D(x,z)$ as the point on $\bar D$ nearest to $z$ (both choices coincide in $d=1$).
This would lead to a different multi-dimensional extension of the Caputo derivative.
\end{remark}

Assuming for simplicity that the kernel $\nu$ has
 a density, $\nu(x;y)$, with respect to Lebesgue measure, consider the following three basic examples.

If $D$ is a half space
\begin{equation}
\label{eqdefsemisp}
D=D_b=\{(x_1,x_2)\in \R^{d+1}: x_1 < b, x_2 \in \R^d\},
\end{equation}
then
\[
R_D(x,y)=\tilde R_D(x,y)= \left(b, x_2 +\frac{b-x_1}{y_1} y_2 \right), \quad x_1 < b \le x_1+ y_1,
\]
\[
A_{D\star}f(x)= \int_{\R^d} dy_2 \int_{-\infty}^{b-x_1} dy_1 \nu (x;y)[f(x+y)-f(x)]
\]
\begin{equation}
\label{eqCapsemisp}
+\int_{\R^d} dy_2 \int_{b-x_1}^{\infty} dy_1 \nu (x;y)[f\left(b, x_2 +\frac{b-x_1}{y_1} y_2 \right)-f(x)].
\end{equation}

If $D$ is a band
\begin{equation}
\label{eqdefband}
D=D_{(a,b)}=\{(x_1,x_2)\in \R^{d+1}: a < x_1 < b, x_2 \in \R^d\},
\end{equation}
then, for $x_1\in (a,b)$,
\[
A_{D\star}f(x)= \int_{\R^d} dy_2 \int_{a-x_1}^{b-x_1} dy_1 \nu (x;y)[f(x+y)-f(x)]
\]
\[
+\int_{\R^d} dy_2 \int_{b-x_1}^{\infty} dy_1 \nu (x;y)[f\left(b, x_2 +\frac{b-x_1}{y_1} y_2 \right)-f(x)]
\]
\begin{equation}
\label{eqCapband}
+\int_{\R^d} dy_2 \int_{-\infty}^{a-x_1} dy_1 \nu (x;y)[f\left(a, x_2 +\frac{a-x_1}{y_1} y_2 \right)-f(x)].
\end{equation}

If $D$ is the unit ball $D=\{x:|x|<1\}$ in $\R^d$, then, for $x\in D$,
\[
R_D(x,y)=\tilde R_D(x,y)= x+\la (x,y) y, \quad \la (x,y)
=\frac{1}{|y|}\left[\sqrt{\left(x,\frac{y}{|y|}\right)^2+1-|x|^2}-\left(x,\frac{y}{|y|}\right)\right],
\]
\begin{equation}
\label{eqCapball}
A_{D\star}f(x)= \int_{\R^d} dy \, \nu (x;y) \left(\1_{\la(x,.) \ge 1} [f(x+y)-f(x)]
+ \1_{\la(x,.) <1} [f(x+\la (x,y) y)-f(x)]\right).
\end{equation}

\subsection{The case $\be \in (1,2)$}

Let us now look at the derivatives of order $\be \in (1,2)$. Interrupting the L\'evy process with
only negative jumps generated by \eqref{eqRLder2} on crossing the boundary $\{x=a\}$ means changing its generator to
the operator
\begin{equation}
\label{eqRLder2intera}
\tilde D^{\be}_{a+}f(x)=\frac{1}{\Ga (-\be)} \int_0^{x-a}\frac{f(x-z)-f(x)+f'(x)z}{z^{1+\be}}dz
+\frac{1}{\Ga (-\be)} \int_0^{x-a}\frac{f(a)-f(x)+f'(x)z}{z^{1+\be}}dz,
\end{equation}
which rewrites as
\[
\tilde D^{\be}_{a+}f(x)=\frac{1}{\Ga (-\be)} \int_0^{x-a}\frac{f(x-z)-f(x)+f'(x)z}{z^{1+\be}}dz
\]
\begin{equation}
\label{eqRLder2intera1}
+\frac{(f(x)-f(a))(x-a)^{-\be}}{\Ga (1-\be)}+\frac{\be f'(x)(x-a)^{1-\be}}{\Ga (2-\be)}.
\end{equation}
This expression differs from the Caputo derivative \eqref{eqCder1} by the term containing $f'(a)$:
\begin{equation}
\label{eqRLder2intera1}
D^{\be}_{a+\star}f(x)=\tilde D^{\be}_{a+}f(x)-\frac{f'(a)(x-a)^{1-\be}}{\Ga (2-\be)},
\end{equation}
which one could expect as operator \eqref{eqCder1} does not have a structure that allows one to interpret
it as a generator for a Markov process precisely because of this term containing $f'(a)$ (recall that
$\Ga (-\be)>0$ and $\Ga (1-\be)<0$ here).

In order to see the meaning of this correcting term let us observe that if $f\in C^2[a,\infty)$, then, up to terms tending to zero,
$\tilde D^{\be}_{a+}f(x)$ behaves like
\[
f'(x)(x-a)^{1-\be}\left[\frac{1}{\Ga (1-\be)}+\frac{\be}{\Ga (2-\be)}\right]=\frac{f'(x) (x-a)^{1-\be}}{\Ga (2-\be)},
\]
as $x\to a$, and thus tends to $\pm \infty$ if $f'(a)$ is positive or negative respectively.
Thus subtraction of the term containing $f'(a)$ in \eqref{eqRLder2intera1} is a regularization of
$\tilde D^{\be}_{a+}$ that makes it finite on smooth functions.

On the other hand, killing the process generated by \eqref{eqRLder2intera1} at the boundary point $x=a$
means setting $f(a)=0$ and then \eqref{eqRLder2intera1} turns exactly into the Riemann-Liouville
derivative
\begin{equation}
\label{eqRLder2intera1kil}
D^{\be}_{a+}f(x)=\frac{1}{\Ga (-\be)} \int_0^{x-a}\frac{f(x-z)-f(x)+f'(x)z}{z^{1+\be}}dz
+\frac{f(x)(x-a)^{-\be}}{\Ga (1-\be)}+\frac{\be f'(x)(x-a)^{1-\be}}{\Ga (2-\be)},
\end{equation}
precisely as in the case $\be \in (0,1)$.

Extension of this procedure is thus clear. Namely, starting with a Feller process with negative jumps, say with the generator
\begin{equation}
\label{eqlevynegjump}
Af(x)=\int_0^{\infty}[f(x-z)-f(x)+f'(x)z]\nu (x,dz)
\end{equation}
such that
\begin{equation}
\label{eqlevynegjumpunboundvar}
\sup_x \int_0^{\infty} z^2 \nu (x, dz)<\infty, \quad \int_0^{\infty} z \nu (x,dz) =\infty
\end{equation}
one can form the corresponding process with jumps interrupted at $a$ as the process generated by
\[
\tilde A_{a+} f(x)=\int_0^{x-a}[f(x-z)-f(x)+f'(x)z]\nu (x,dz)
\]
\begin{equation}
\label{eqlevynegjumpinter}
+(f(a)-f(x)) \int_{x-a}^{\infty}\nu (x,dz)
+f'(x) \int_{x-a}^{\infty} z \nu (x,dz).
\end{equation}
Its main term of asymptotics as $x\to a$ is
\[
f'(x) \int_{x-a}^{\infty} (z-(x-a)) \nu (x,dz),
 \]
which is unbounded unless $f'(a)=0$. Thus the analog of the Caputo fractional derivative is obtained by subtracting
this 'infinity':
\begin{equation}
\label{eqlevynegjumpinterreg}
A_{a+\star} f(x)= \tilde A_{a+} f(x)-f'(a) \int_{x-a}^{\infty}  (z-(x-a)) \nu (a,dz).
\end{equation}

Moreover, it implies the following limiting behavior for $f\in C^2[a,\infty)$:
\begin{equation}
\label{eqlevynegjumpinterreg1}
\begin{aligned}
& \lim_{x\to a_+} A_{a+\star} f(x)=0, \\
& \lim_{x\to a_+} \tilde A_{a+} f(x)=0, \, \text{if} \, f'(a)=0.
\end{aligned}
\end{equation}

\begin{remark} Notice that generator \eqref{eqlevynegjumpinter} has variable coefficients even if underlying process was a
L\'evy process. Hence it is not directly clear whether \eqref{eqlevynegjumpinter} generates a well defined process.
This issue will be addressed in the next sections.
\end{remark}

Similarly one defines $A_{a-\star}$ for Feller processes with positive jumps.
More generally, for any Feller process on $\R$ generated by the operator
\begin{equation}
\label{eqlevyneg}
Af(x)=G(x)f''(x)+\ga (x) f'(x) +\int_{\R}[f(x+z)-f(x)-f'(x)z\chi (z)]\nu (x,dz),
\end{equation}
where $\chi$ is some mollifier (that traditionally is taken as either $\1_{|z|\le 1}$ or as $1/(1+z^2)$),
one defines the corresponding process with jumps interrupted on crossing an interval $[a,b]$
as the process generated by the operator
\[
\tilde A_{[a,b]}f(x)=G(x)f''(x)+\ga (x)f'(x) +\int_{a-x}^{b-x}[f(x+z)-f(x)-f'(x)z\chi (z)]\nu (x, dz)
\]
\begin{equation}
\label{eqlevyneginterab}
+(f(b)-f(x))\int_{b-x}^{\infty} \nu (x,dz)+ (f(a)-f(x))\int^{a-x}_{-\infty} \nu (x,dz)
-f'(x) \int_{\R \setminus (a-x,b-x)} z \chi (z) \nu (x,dz)
\end{equation}
for $x\in (a,b)$ (when such process is well defined), or for arbitrary $x$
 \[
\tilde A_{[a,b]}f(x)=G(x)f''(x)+\ga (x)f'(x) +\1_{x<b}\int_0^{\infty}\left(f[(x+z)\wedge b]-f(x)-f'(x)z\chi (z)\right)\nu (x,dz)
\]
\begin{equation}
\label{eqlevyneginterab1}
+\1_{x>a}\int_{-\infty}^0(f([(x+z)\vee a]-f(x)-f'(x)z\chi (z))\nu (x,dz).
\end{equation}

The corresponding analog of the Caputo derivative is obtained by subtracting
 the singularity at the boundary points, that is, if
\begin{equation}
\label{eqlevynegjumpunboundvar1}
 \int_{-1}^0 |z| \nu (x,dz) =\infty, \quad \int_0^1 z \nu (x,dz) =\infty,
\end{equation}
then, for $x\in (a,b)$,
\begin{equation}
\label{eqlevyneginterabreg}
A_{[a,b]\star}f(x)=\tilde A_{[a,b]}f(x)-f'(b) \int_{b-x}^{\infty} [(b-x)-z \chi (z)]\nu(x, dz)
-f'(a)\int_{-\infty}^{a-x} [(a-x)-z \chi (z)]\nu (x,dz).
\end{equation}
The analog of the Riemann-Liouville derivative is obtained from $\tilde A_{[a,b]}f(x)$ by setting the boundary
values of $f$ to zero yielding the operator of the processes generated by $\tilde A_{[a,b]}f(x)$, but killed
at the boundary:
\[
A_{[a,b]}f(x)=G(x)f''(x)+\ga (x) f'(x) +\int_{a-x}^{b-x}[f(x+z)-f(x)-f'(x)z\chi (z)]\nu (x,dz)
\]
\begin{equation}
\label{eqlevyneginterab}
-f(x) \int_{\R \setminus (a-x,b-x)} \nu (x,dz)
-f'(x) \int_{\R \setminus (a-x,b-x)} z \chi (z) \nu (x,dz).
\end{equation}

But what is the relation between equations involving $A_{[a,b]\star}$ and $\tilde A_{[a,b]}$? The point is that
if $\tilde A_{[a,b]}f=g$ on $[a,b]$ for some bounded $g$, then necessarily $f'(a)=f'(b)=0$ (as otherwise
$\tilde A_{[a,b]}$ would be unbounded for $x\to a$ or $x\to b$) and hence
$\tilde A_{[a,b]}f=A_{[a,b]\star}$, so that at least classical solutions for the problems
\begin{equation}
\label{eqbaslinfracestrep2}
\tilde A_{[a,b]}f(x)=\la f(x) +g(x), \quad f(a)=f_a, f(b)=f_b
\end{equation}
also solve the problem
\begin{equation}
\label{eqbaslinfracestrep1}
A_{[a,b]\star}f(x)=\la f(x) +g(x), \quad f(a)=f_a, f(b)=f_b.
\end{equation}
The problem \eqref{eqbaslinfracestrep2} can be naturally settled probabilistically.

\begin{remark} The subtraction of singularity in definition \eqref{eqlevyneginterabreg} makes it dependent
on representation \eqref{eqlevyneg}. This is however not very essential for solving the corresponding
fractional differential equations, because operator $\tilde A$ is not representation dependent.
\end{remark}

Similarly in multidimensional case the analog of the Caputo derivative can be obtained
by subtracting the singularity from the generator of the process obtained from an initial one by restricting the jumps to
a chosen domain $\bar D$.
For instance, for a Feller process generated by the operator
\begin{equation}
\label{eqlevygenjump}
Af(x)=\int_{\R^d}[f(x+z)-f(x)-(f'(x),z)\chi (z)]\nu (x,dz),
\end{equation}


the process with jumps restricted to land on $\bar D$ (stopped-on-crossing-the-boundary process) for $D$ the half-space
\eqref{eqdefsemisp} has the generator

\[
\tilde A_{D}f(x)= \int_{\R^d} dy_2 \int_{-\infty}^{b-x_1} dy_1 \nu (x;y)[f(x+y)-f(x)-(f'(x), y)\chi (y)]
\]
\begin{equation}
\label{eqCapsemispbelarge}
+\int_{\R^d} dy_2 \int_{b-x_1}^{\infty} dy_1 \nu (x;y)[f\left(b, x_2 +\frac{b-x_1}{y_1} y_2 \right)-f(x)-(f'(x), y)\chi (y)],
\end{equation}
for $x\in D$,
which is a direct extension of \eqref{eqCapsemisp}. However, unlike \eqref{eqCapsemisp},
$\tilde A_Df(x)$ now diverges as $x$ tends to a boundary point if $\pa f/\pa x_1 (x)\neq 0$ there.
In fact, as one sees directly, the main term of $\tilde A_Df(x)$ as $(x_1,x_2) \to (b,x_2)$ is
\[
\frac{\pa f}{\pa x_1} (x_1,x_2) \int_{b-x_1}^{\infty} dy_1 \int_{\R^d} dy_2 \, (b-x_1-y_1) \nu (b,x_2;y_1,y_2).
\]
Hence the analog of the Caputo derivative is
\begin{equation}
\label{eqCapsemispbelargeCap}
A_{D\star}f(x)=\tilde A_{D}f(x) - \frac{\pa f}{\pa x_1} (b,x_2) \int_{b-x_1}^{\infty} dy_1 \int_{\R^d} dy_2 \, (b-x_1-y_1) \nu (b,x_2;y_1,y_2).
\end{equation}

\section{Basic well-posedness results and examples}
\label{secwelposedfrac}

\subsection{The case $\be \in (0,1), d=1$}

Let us start with the operator \eqref{eqgeninterbeta01gen} assuming for simplicity that the kernels $\nu$ have densities
with respect to Lebesgue measure. Therefore let
\begin{equation}
\label{eqgenjumpboundvar}
Af(x)=\ga (x) f'(x)+\int_{\R}  (f(x+y)-f(x)) \nu (x, y) \, dy.
\end{equation}
Choosing for definiteness extension \eqref{eqgeninterbeta01gen1}
 (of \eqref{eqgeninterbeta01gen} defined for $x\in (a,b)$) let
\[
A_{[a,b]\star}f(x)= \ga (x) f'(x) +\1_{x< b}\int_0^{\infty}  (f[(x+y)\wedge b]-f(x)) \nu (x, dy)
\]
\begin{equation}
\label{eqgeninterbeta01genrep}
+\1_{x>a}\int_{-\infty}^0  (f[(x+y)\vee a]-f(x)) \nu (x, dy),
\end{equation}
which can be represented as the sum of the generators of a decreasing process, an increasing process and a drift:
\[
A_{[a,b]\star}=A_{a+\star}+A_{b-\star}+\ga (.) \frac{d}{dx}
\]
with
\[
A_{a+\star}f(x)= \1_{x>a}\int_{-\infty}^0  (f[(x+y)\vee a]-f(x)) \nu (x, dy),
\]
\[
A_{b-\star}f(x)= \1_{x< b}\int_0^{\infty}  (f[(x+y)\wedge b]-f(x)) \nu (x, dy).
\]

Let $-\infty < a < b <\infty$.
Let us denote, as usual,  by $C[a,b]$ (resp. $C_{\infty}(\R)$) the Banach space of continuous
functions on $[a,b]$ (resp. on $\R$ vanishing at infinity),
by $C_{\infty}(-\infty,a]$ (resp. $C_{\infty}[a,\infty)$) the Banach space of continuous functions
on $(-\infty,a]$ (resp. $[a,\infty)$) vanishing at infinity, by $C^k_{\infty}(\R)$, $C^k_{\infty}(-\infty,a]$,
$C^k_{\infty}[a,\infty)$, $C^k[a,b]$ the subspaces of functions of the corresponding spaces $C_{\infty}(\R)$, $C_{\infty}(-\infty,a]$,
$C_{\infty}[a,\infty)$, $C[a,b]$, having derivatives up to order $k$ from $C_{\infty}(\R)$, $C_{\infty}(-\infty,a]$,
$C_{\infty}[a,\infty)$, $C[a,b]$, respectively.

Recall also that, for a domain $D \in \R^d$ with boundary $\pa D$ and a Markov process $X_x(t)$ in $\R^d$ or just in $D$,
a point $x\in \pa D$ is called regular if $\tau_D(x) \to 0$ in probability for $x\to x_0$, $x\in D$, where $\tau_D$ is the exit
time from $D$. We say that it is regular in expectation if $\E \tau_D(x) \to 0$  for $x\to x_0$, $x\in D$.

\begin{remark}
1. A point zero for $D=(0,\infty)$ is regular for a Brownian motion, but not regular in expectation, as
$\E \tau_D(x) =\infty$ for all $x\in D$.
2. We are using the notion of a regular point arising from the theory
of parabolic PDEs (see e. g. \cite{Fr} or \cite{Ko11}),
which is different from the corresponding notion used in the theory of L\'evy processes (see \cite{Kypbook}).
\end{remark}

\begin{theorem}
\label{theorderonecapder}
Assume that $\ga (x)$ is continuously differentiable with a bounded derivative and that
$\nu(x,y)$ is a continuous function of two variables, which is continuously differentiable
with respect to the first variable and has the following uniform bounds and tightness property
\begin{equation}
\label{eq1theorderonecapder}
\sup_x \int |y|\nu (x,y) \, dy <\infty,
\quad \sup_x \int |y|\, \left|\frac{\pa}{\pa x}\nu (x,y)\right| \, dy <\infty,
\end{equation}
and
\begin{equation}
\label{eq2theorderonecapder}
\lim_{\de \to 0} \sup_x \int_{|y|\le \de} |y|\nu (x,y) \, dy =0.
\end{equation}
(i) Then the operator $A_{a+\star}$ generates a Feller process on $[a,\infty)$
and a Feller semigroup on $C_{\infty}[a,\infty)$ with the invariant core $C^1_{\infty}[a,\infty)$,
 the operator $A_{b-\star}$ generates a Feller process on $(-\infty,b]$
and a Feller semigroup on $C_{\infty}(-\infty,b]$ with the invariant core $C^1_{\infty}(-\infty,b]$.

(ii) Moreover, the operator $A_{[a,b]\star}$ generates a Feller process on $\R$
and a Feller semigroup on $C_{\infty}(\R)$ and, if $\ga (x)=0$,
then $A_{[a,b]\star}$ generates also a Feller process on $[a,b]$
and a Feller semigroup on $C[a,b]$ with the invariant core $C^1[a,b]$.

(iii) If
\begin{equation}
\label{eq20theorderonecapder}
\int_{-\infty}^0 \min (|y|,\ep)\nu (a,y) \, dy >C\ep^r
\end{equation}
or
\begin{equation}
\label{eq200theorderonecapder}
\int_0^{\infty} \min (y,\ep)\nu (b,y) \, dy >C\ep^r
\end{equation}
for some $C>0$, $r \in (0,1)$, then the point $a$ (resp. $b$) is regular in expectation
for the first (resp. second) process in (i).

(iv) If $\ga (a)<0$ (resp. $\ga (b) >0$), then
then the point $a$ (resp. $b$) is a regular in expectation boundary point of the interval $(a,b)$
for the process generated by $A_{[a,b]\star}$.

(v) If $A_{[a,b]\star}$ is the operator on the l.h.s. of \eqref{eqixderlintwoside} and $\om_{j_0}>0, \ga_{j_0}>0$, where
$\be_{j_0}$ is the unique maximum of all $\be_j$, then the points $a$ and $b$ are regular in expectation
boundary points of $(a,b)$ for the process generated by $A_{[a,b]\star}$.

\end{theorem}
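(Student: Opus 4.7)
The plan is to identify each of $A_{a+\star}$, $A_{b-\star}$, and $A_{[a,b]\star}$ as a generator of order at most one in the sense of \cite{Ko10,Ko11} and invoke the generation theorems developed there. For $A_{a+\star}$ I would rewrite
\[
A_{a+\star}f(x)=\int_{(-\infty,0]}(f(x+y)-f(x))\,\tilde\nu(x,dy),
\]
with the modified kernel $\tilde\nu(x,dy)=\1_{y>a-x}\,\nu(x,y)\,dy+\de_{a-x}\!\int_{-\infty}^{a-x}\nu(x,y)\,dy$, where the atom at $y=a-x$ of finite mass (finite because $\int|y|\nu(x,y)\,dy<\infty$ implies $\int_{-\infty}^{a-x}\nu(x,y)\,dy \le (x-a)^{-1}\int|y|\nu(x,y)\,dy$) collects all jumps that attempted to cross $a$. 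Hypothesis \eqref{eq1theorderonecapder} yields the uniform bound on $\int\min(1,|y|)\tilde\nu(x,dy)$, \eqref{eq2theorderonecapder} gives the uniform tightness near the origin, and the $C^1$-dependence of $\nu$ on $x$ together with the smooth shift of the atom-location $x\mapsto a-x$ provides the required Lipschitz dependence of $x\mapsto\tilde\nu(x,\cdot)$ in the Wasserstein-$1$ metric. Invariance of $C^1_\infty[a,\infty)$ under the semigroup then follows from the standard differentiation-under-the-integral step of the order-at-most-one calculus; the analysis for $A_{b-\star}$ is symmetric, giving (i).

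For (ii), I would split $A_{[a,b]\star}=A_{a+\star}+A_{b-\star}+\ga(\cdot)\,d/dx$ on $\R$; each summand is a generator of order at most one, and the class is closed under sums, producing the Feller semigroup on $C_\infty(\R)$. When $\ga\equiv 0$, by construction every jump initiated from inside $[a,b]$ lands in $[a,b]$, so $[a,b]$ is invariant under the generated process and the restriction yields the Feller semigroup on $C[a,b]$ with invariant core $C^1[a,b]$.

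For the regularity claims (iii)--(v) the plan is to construct an explicit nonnegative $C^1$ supersolution $u$ on a one-sided neighborhood $(a,a+\de)$ with $A_{a+\star}u\le -1$ and $u(a+)=0$; Dynkin's formula then gives $\E\tau(x)\le u(x)\to 0$. For (iii) I would take $u(x)=c(x-a)^{1-r}$ and use the concavity inequality $(1+t)^{1-r}-1\le(1-r)t$ on $(-1,0)$, which together with the boundary-hitting jump contribution $-h^{1-r}\!\int_{-\infty}^{-h}\nu(a,y)\,dy$ gives, for $h=x-a$,
\[
A_{a+\star}u(a+h)\le -c_0\,h^{-r}\!\int_{-\infty}^{0}\min(|y|,h)\,\nu(a,y)\,dy\le -c_0 C
\]
by \eqref{eq20theorderonecapder}, where continuity of $\nu(x,y)$ in $x$ permits replacing $\nu(x,\cdot)$ by $\nu(a,\cdot)$ up to a small error near the boundary. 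Rescaling $c$ yields $A_{a+\star}u\le -1$. For (iv), when $\ga(a)<0$ the linear $u(x)=2(x-a)/|\ga(a)|$ already does the job: the drift dominates and the jump contribution is $O(x-a)$ from \eqref{eq1theorderonecapder}. For (v), at $x=a$ the kernel of $-\sum_j\om_j D^{\be_j}_{a+\star}$ has, near $y=0$, density behaving as $\sim(\om_{j_0}/|\Ga(-\be_{j_0})|)\,|y|^{-1-\be_{j_0}}$, so \eqref{eq20theorderonecapder} holds with $r=\be_{j_0}$ and part (iii) applies; the boundary point $b$ is handled symmetrically using \eqref{eq200theorderonecapder}.

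The main technical obstacle is verifying the smoothness hypotheses of the order-at-most-one generation theory for the kernel $\tilde\nu(x,\cdot)$, because the moving atom at $y=a-x$ prevents pointwise differentiation of $\tilde\nu$ in $x$. One must instead work at the level of the integrated action against $C^1$-test functions, where the atom's contribution becomes $\int_{-\infty}^{a-x}(f(a)-f(x))\nu(x,y)\,dy$, a quantity that is $C^1$ in $x$ thanks to the bounded-first-moment condition \eqref{eq1theorderonecapder}; this is precisely the subtlety the order-at-most-one calculus is designed to accommodate.
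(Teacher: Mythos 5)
Your overall strategy coincides with the paper's: parts (i)--(ii) are a reduction to the generation theory for operators of order at most one (the paper follows the scheme of Theorem 5.1.1 of \cite{Ko11}), and (iii)--(v) are handled by power-type Lyapunov functions together with Dynkin's formula; the paper uses $(b-x)^{\om}$ with $\om$ small where you use $(x-a)^{1-r}$, which is the same device. The regularity arguments are essentially sound, modulo two small slips: in (iv) the jump contribution is only $o(1)$ as $x\to a$, not $O(x-a)$ (for a stable-like kernel it is of order $(x-a)^{1-\be}$), though this still suffices because the drift term stays bounded away from zero; and in (v) the correct exponent in \eqref{eq20theorderonecapder} is $r=1-\be_{j_0}$ rather than $r=\be_{j_0}$, since $\int_0^{\infty}\min(z,\ep)\,z^{-1-\be}\,dz \asymp \ep^{1-\be}$.

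The genuine gap is in (i). You correctly identify the moving atom at $y=a-x$ as the obstruction to applying the order-at-most-one generation theorem off the shelf, but your proposed resolution --- that the atom's contribution $\int_{-\infty}^{a-x}(f(a)-f(x))\nu(x,y)\,dy$ is by itself $C^1$ in $x$ thanks to \eqref{eq1theorderonecapder} --- is false up to the boundary: its $x$-derivative contains the term $-(f(a)-f(x))\,\nu(x,a-x)$ coming from the variable limit of integration, which for $\nu(x,y)\sim |y|^{-1-\be}$ is of order $(x-a)^{-\be}$ and blows up as $x\to a$. What actually saves the day, and what the paper's proof makes explicit, is that this term cancels exactly against the boundary term produced by differentiating the truncated integral $\int_{a-x}^{0}(f(x+y)-f(x))\nu(x,y)\,dy$ in its lower limit; after the cancellation the derivative $g=f'$ satisfies an equation of the form \eqref{eq3theorderonecapder} whose principal part is conditionally positive with a nonpositive coefficient at $g$ (hence generates a positivity-preserving contraction), perturbed by operators that are bounded uniformly by \eqref{eq1theorderonecapder}. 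One must then still construct the semigroup: the paper does this by cutting off small jumps ($\nu_h=\1_{|y|>h}\nu$), deriving $C^1$-bounds for the approximating semigroups $T_t^h$ that are uniform in $h$ from the structure just described, and passing to the limit $h\to 0$ via the standard perturbation identity for the difference $T_t^{h_1}-T_t^{h_2}$; this limiting argument is what ultimately shows that $C^1_{\infty}$ lies in the domain and is an invariant core. Your sketch omits both the cancellation and the approximation/limit step, and the Wasserstein--Lipschitz condition you invoke is not satisfied by the kernel $\tilde\nu(x,\cdot)$ in the form the cited theorems require, so (i) cannot be closed by citation alone.
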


\begin{proof}
(i)
Notice that the operator $A_{[a,b]\star}$ is a bounded operator $C^1[a,b] \to C[a,b]$ such that
\begin{equation}
\label{eq20btheorderonecapder}
\lim_{x\to a_+} A_{a+\star}f(x)=0, \quad \lim_{x\to b_-} A_{b-\star}f(x)=0
\end{equation}
for $f\in C^1[a,b]$, as follows from
\begin{equation}
\label{eq20atheorderonecapder}
\lim_{\de \to 0} \de \int_{\R\setminus [-\de,\de]}\nu (x,y) \, dy =0.
\end{equation}

\begin{remark}
Equation \eqref{eq20atheorderonecapder} is  a consequence of the first bound in \eqref{eq1theorderonecapder}.
In fact, since
\[
\int_{\de}^1 y\nu (x,y) \, dy = \de F_x(\de) +\int_{\de}^1 F_x (y) \, dy,
\]
where  $F_x(y)=\int_y^1 \nu(x,z) \, dz$, it follows that the both terms on the r.h.s. of this equation
are uniformly bounded. Hence, the l.h.s. of this equation and the second term on the r.h.s. converge to
$\int_0^1 y\nu (x,u) \, dy$ implying  $\de F_x(\de) \to 0$ as $\de \to 0$.
\end{remark}

We shall follow the strategy of proof from Theorem 5.1.1 of \cite{Ko11}.
Let us now work for definiteness with $A_{b-\star}$ (other cases are dealt with analogously).
Differentiating (and using straightforward cancelations) yields
\[
\frac{d}{dx} A_{b-\star}f(x)= \int_0^{b-x}  \left[f'(x+y)-f'(x)\right] \nu (x, y) \, dy
-f'(x)\int_{b-x}^{\infty} \nu (x, y)\, dy
\]
\[
+\int_0^{b-x}  (f(x+y)-f(x)) \frac{\pa \nu}{\pa x}(x, y) \, dy
+ (f(b)-f(x))\int_{b-x}^{\infty} \frac{\pa \nu}{\pa x} (x, y)\, dy.
\]
Thus if $f$ solves the equation
\[
\dot f =A_{b-\star}f,
\]
then $g=f'$ (if exists) solves the equation
\[
\dot g =\int_0^{b-x}  \left[g(x+y)-g(x)\right] \nu (x, y) \, dy
-g(x)\int_{b-x}^{\infty} \nu (x, y)\, dy
\]
\begin{equation}
\label{eq3theorderonecapder}
+\int_0^{b-x} dy \int_0^y g(x+z) dz \frac{\pa \nu}{\pa x}(x, y)
+ \int_x^b g(z) dz \int_{b-x}^{\infty} \frac{\pa \nu}{\pa x} (x, y)\, dy.
\end{equation}

Let us introduce the approximation $A_{b-\star h}$ for our operator obtained by changing $\nu(x,y)$ to
$\nu_h(x,y)=\1_{|y|>h} \nu (x,y)$. For any $h$ the operator $A_{b-\star h}$
is bounded in $C(-\infty,b]$ and hence generates a conservative Feller semigroup $T_t^h$ there.
Moreover, the operator on the r.h.s. of \eqref{eq3theorderonecapder}
becomes also bounded when $\nu$ is replaced by $\nu_h$, so that this equation becomes well-posed in
$C(-\infty,b]$, so that $T_t^h$ is also a strongly continuous semigroup in $C^1(-\infty,b]$.

The key observation now is that  $T_t^h$ are bounded in $C^1(-\infty,b]$
uniformly in $h$, because
the first two terms on the r.h.s. of \eqref{eq3theorderonecapder} represent a conditionally positive operator
with a negative coefficients at $g$, which therefore generates a positivity preserving
contraction in $C(-\infty,b]$, and the last two terms are uniformly (in $h$) bounded operators.
Hence $(T_t^h f)'(x)$ (by $'$ we denote the derivative with respect to $x$)
are uniformly bounded for all $h \in (0,1]$
and $t$ from any compact interval whenever $f\in
C_{\infty}^1(-\infty,b]$. Therefore, writing
\[
(T_t^{h_1}-T_t^{h_2})f=\int_0^t
T_{t-s}^{h_2}(A_{b-\star h_1}-A_{b-\star h_2})T_s^{h_1}\, ds
\]
for arbitrary $h_1 >h_2$ and estimating
\[
|(A_{b-\star h_1}-A_{b-\star h_2})T_s^{h_1}f(x)| \le
 \int_{h_2 \le |y| \le h_1}
 |(T_s^{h_1}f)(x+y)-(T_s^{h_1}f)(x)| \nu (x,y)\, dy
\]
\[
 \le \int_0^{h_1}
 \| ( T_s^{h_1}f)' \| \nu (x,y) |y| \, dy=o(1) \|f\|_{C(-\infty,b]}, \quad h_1\to 0,
 \]
 yields
\begin{equation}
\label{eqshortdifference}
 \|(T_t^{h_1}-T_t^{h_2})f\|=o(1) t\|f\|_{C^1(-\infty,b]}, \quad h_1\to 0.
\end{equation}
Therefore the family
$T_t^hf$ converges to a family $T_tf$,  as $h \to 0$, which also forms a strongly continuous semigroup in
$C(-\infty,b]$.
Writing
\[
\frac{T_tf-f}{t}=\frac{T_tf-T_t^hf}{t}+\frac{T_t^hf-f}{t}
\]
and noting that by \eqref{eqshortdifference} the first term is of
order $o(1) \|f\|_{C^1(-\infty,b]}$ as $h\to 0$ allows one to conclude
that  $C^1(-\infty,b]$ belongs to the domain of the generator of the semigroup $T_t$ in
$C(-\infty,b]$.

 Applying to $T_t$ the procedure applied above to $T_t^h$ (differentiating
 the evolution equation with respect to $x$) shows that $T_t$
 defines also a strongly continuous semigroup in $C^1(-\infty,b]$, and hence $C^1(-\infty,b]$
 is an invariant core for $T_t$.

\begin{remark}
Notice that the semigroup $T_t$ extends also to the strongly continuous semigroup
on $C_{\infty}(\R^)$ generated by $A_{b-\star}$ with an invariant domain $C^1_{\infty}$,
because the right condition of \eqref{eq20btheorderonecapder} ensures a smooth gluing
with the value $A_{b-\star}fx)=0$ for $x>b$ for any $f\in C^1(\R)$.
\end{remark}

(ii) This is quite similar and is omitted.

(iii) To prove regularity of the boundary, we shall use the method of Lyapunov functions.
Namely, to show that, say, $b$ is regular for
the last process in (i), it is sufficient to find a continuous function $f$ in a neighborhood of $[a,b]$ such that
$f$ is differentiable for $x\in (a,b)$,
$f(b)=0$,  and for $x\in (c,b)$ with some $c \in (a,b)$ one has $f(x)>0$, $A_{[a,b]\star}f(x) <0$
(see Proposition 6.3.2 of \cite{Ko11}). As such function one can take $f_{\om}(x)=(b-x)^{\om}$ with some $\om \in (0,1)$.
In fact, clearly $f_{\om} (b)=0$, $f_{\om}(x)>0$ for $x<b$ and for $x$ approaching $b$ from the left,
$A_{[a,b]\star}f(x)$ is of order
\[
-(b-x)^{\om-1} \int_0^{\infty} \min (y,b-x) \nu (b,y) \, dy
\]
which tends to $-\infty$ as $x\to b$ for sufficiently small $\om$ under the assumptions of (iii).

(iv) Using the same Lyapunov function $f_{\om}(x)=(b-x)^{\om}$ one sees that the drift term always
dominates the jump part part of the generator.

(v) Proving regularity in a general case can be subtle. However, for the process from (v) the same Lyapunov function
 $f_{\om}(x)=(b-x)^{\om}$ yields the required result.

\end{proof}

Theorem \ref{theorderonecapder} allows one to solve equations involving $A_{[a,b]\star}$ or $A_{a \pm \star}$
by the standard techniques of stochastic analysis. Namely, let us denote by $X_x(t)$ the Markov process generated by
$A$ of \eqref{eqgenjumpboundvar} and by $X^*_x(t)[a,b]$ the Markov process on $[a,b]$ generated by $A_{[a,b]\star}$.

Let us stress that the process $X^*_x(t)[a,b]$ (which is interrupted but not stopped at the boundary)
is generated by $A_{[a,b]\star}$ defined on the domain $C^1_{\infty}(\R)$ (or else on the domain
$C^1[a,b]$ if $\ga (b)\le 0$ and $\ga (a) \ge 0$); the process $X^*_x(t)[a,b;stop]$ on $[a,b]$
obtained from $X^*_x(t)[a,b]$ by stopping at the boundary is generated by
$A_{[a,b]\star}$ defined on the domain, which is a subspace of $C^1[a,b]$ of functions $f$
such that $A_{[a,b]\star}f(a)=A_{[a,b]\star}f(b)=0$;
the process $X^*_x(t)[a,b;kill]$ killed at the boundary is generated by
$A_{[a,b]\star}$ defined on the domain, which is a subspace of $C^1[a,b]$ of functions $f$
such that $A_{[a,b]\star}f(a)=A_{[a,b]\star}f(b)=0$ and $f(a)=f(b)=0$. It is easily seen that
if $\int_{-\infty}^a \nu (a,y)dy =\infty$ and $\int_b^{\infty} \nu (b,y)dy =\infty$, the operator
$A_{[a,b]\star}$ generates a strongly continuous semigroup (of the process $X^*_x(t)[a,b]$
killed on the boundary) on the subspace $C_0[a,b]$
of $C[a,b]$ consisting of functions vanishing at $a$ and $b$.

Let $\tau$ denote the first exit time for $X^*_x(t)[a,b]$ or $X_x(t)$ from $(a,b)$:
\[
\tau =\inf \{t\ge 0: X^*_x(t)[a,b]\notin (a,b)\}=\inf \{t\ge 0: X_x(t)\notin (a,b)\}.
\]
Applying Dynkin's martingale to a function $f\in C^1[a,b]$ and Doob's optional sampling theorem
for the stopping time $\tau$ (see e. g. \cite{EK} for the presentation of these two basic
tools of stochastic analysis) yields
\[
f(x)=\E \left[ f(X^*_x(\tau)[a,b]) -\int_0^{\tau} (A_{[a,b]\star} f)(X^*_x(s)[a,b]) \, ds \right]
\]
Hence if $f$ is a solution to problem \eqref{eqbaslinfracest}, then
\begin{equation}
\label{eqsolvetwosidgen1}
f(x)= f(a) \P (X^*_x(\tau)[a,b]=a)+f(b) \P (X^*_x(\tau)[a,b]=b) - \E \int_0^{\tau} g (X^*_x(s)[a,b]) \, ds.
\end{equation}
Moreover, since the trajectories of $X_x(t)$ and $X^*_x(t)[a,b]$ coincide till time $\tau$ this can be expressed
entirely in terms of the process $X_x(t)$ as
\begin{equation}
\label{eqsolvetwosidgen2}
f(x)= f(a) \P (X_x(\tau)\le a)+f(b) \P (X_x(\tau) \ge b) - \E \int_0^{\tau} g (X_x(s)) \, ds.
\end{equation}
Introducing, as is common in the theory of L\'evy processes, the occupation-till-exit measure $H(x,dy)$ on $(a,b)$ by
\begin{equation}
\label{eqdefoccuptllexit}
H(x,B)=\E \int_0^{\tau} \1_B(X_x(s)) \, ds,
\end{equation}
allows one to rewrite \eqref{eqsolvetwosidgen2} as
\begin{equation}
\label{eqsolvetwosidgen3}
f(x)= f(a) \P (X_x(\tau)\le a)+f(b) \P (X_x(\tau) \ge b) - \int_a^b g (y) H(x,dy).
\end{equation}

Thus we obtain the following result.
\begin{theorem}
\label{theorderwelpos1}
(i) Under the assumptions of Theorem  \ref{theorderonecapder} problem \eqref{eqbaslinfracest}
can have at most one classical solution. If $\E \tau_{[a,b]}(x) <\infty$ for all $x\in D$,
the probabilities $\P (X_x(\tau)\le a)$, $\P (X_x(\tau)\ge b)$ are
functions from $C^1[a,b]$ and the measure $H(x,dy)$ is continuously weakly differentiable in $x$ (so that the integral
on the r.h.s. of \eqref{eqsolvetwosidgen3} belongs to $C^1[a,b]$ for any $g\in C[a,b]$),
then formula \eqref{eqsolvetwosidgen3} supplies the unique classical solution to problem \eqref{eqbaslinfracest}.
\end{theorem}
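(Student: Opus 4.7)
The plan is to use Dynkin's martingale together with Doob's optional sampling, leveraging Theorem \ref{theorderonecapder}(ii) which provides the Feller semigroup generated by $A_{[a,b]\star}$ with $C^1[a,b]$ (or $C^1_{\infty}(\R)$) as invariant core; this is exactly what allows Dynkin's identity to be applied to any candidate classical $C^1$ solution. The argument splits naturally into a uniqueness part, which uses only the conclusions of Theorem \ref{theorderonecapder}, and an existence/representation part, which invokes the additional smoothness hypotheses on $\P(X_x(\tau)\le a)$, $\P(X_x(\tau)\ge b)$, and $H(x,\cdot)$.

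For uniqueness I would take any classical $f\in C^1[a,b]$ solving \eqref{eqbaslinfracest} and form Dynkin's martingale $M_t = f(X^*_x(t)[a,b]) - f(x) - \int_0^t A_{[a,b]\star}f(X^*_x(s)[a,b])\,ds$ for the process built in Theorem \ref{theorderonecapder}. Since $A_{[a,b]\star}f=g$ on $(a,b)$ and $f,g$ are bounded, the stopped process $M_{t\wedge\tau}$ is bounded whenever $\E\tau(x)<\infty$, so Doob's optional sampling at $\tau$ yields $f(x) = \E f(X^*_x(\tau)[a,b]) - \E\int_0^\tau g(X^*_x(s)[a,b])\,ds$. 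Splitting the boundary event $\{X^*_x(\tau)=a\}\cup\{X^*_x(\tau)=b\}$ and using that trajectories of $X_x$ and $X^*_x[a,b]$ coincide up to $\tau$ gives exactly \eqref{eqsolvetwosidgen3}; uniqueness is then immediate because the right-hand side depends only on $f_a,f_b,g$ and the law of $X_x$.

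For existence I would define $f$ directly by \eqref{eqsolvetwosidgen3}; by hypothesis $f\in C^1[a,b]$, so $A_{[a,b]\star}f$ is classically meaningful. The boundary values $f(a)=f_a$, $f(b)=f_b$ follow from regularity-in-expectation (Theorem \ref{theorderonecapder}(iv),(v)): as $x\to a_+$ one has $\E\tau(x)\to 0$, hence $\P(X_x(\tau)\le a)\to 1$, while the occupation term $\int_a^b g(y) H(x,dy)$ is bounded by $\|g\|\E\tau(x)$ and so vanishes. To recover $A_{[a,b]\star}f=g$ on $(a,b)$, I would apply the strong Markov property of $X^*_x[a,b]$ at $t\wedge\tau$ to the defining formula to get $\E f(X^*_x(t\wedge\tau)[a,b]) - f(x) = \E\int_0^{t\wedge\tau} g(X^*_x(s)[a,b])\,ds$. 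Dividing by $t$ and sending $t\to 0_+$, the right-hand side converges to $g(x)$ by continuity of $g$ along càdlàg paths and Fubini, while the left-hand side converges to $A_{[a,b]\star}f(x)$ because $f$ lies in the invariant core from Theorem \ref{theorderonecapder}(ii); the difference between $X^*(t\wedge\tau)$ and $X^*(t)$ contributes $O(\|f\|\P(\tau<t))=o(t)$ at interior $x$, where $\tau(x)>0$ almost surely.

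The main delicate point is this last limit: one must justify replacing $t\wedge\tau$ by $t$ and simultaneously identifying the left-hand limit with the generator. This is a standard backward-Kolmogorov step, but it depends on $\tau(x)>0$ a.s.\ at interior points (which follows from càdlàg paths since $(a,b)$ is open) and on dominated-convergence estimates using $\E\tau(x)<\infty$ and boundedness of $f,g$. Beyond this, every structural ingredient needed — the invariant core, regularity of the endpoints, finiteness of exit times, and smoothness of hitting probabilities and of $H(x,dy)$ — is either assumed in the theorem or has already been established in Theorem \ref{theorderonecapder}, so no further semigroup-theoretic input is required.
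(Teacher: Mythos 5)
Your argument follows the paper's own route: Dynkin's martingale together with Doob's optional sampling at $\tau$ forces any classical solution into the representation \eqref{eqsolvetwosidgen3}, which gives uniqueness, and the stated regularity hypotheses on the hitting probabilities and on $H(x,dy)$ then guarantee that this formula is itself the classical solution. Your existence step (strong Markov property plus the backward-Kolmogorov limit as $t\to 0_+$) only fills in details that the paper leaves implicit, so the two proofs are essentially the same.
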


\begin{remark}
In the classical analysis of boundary-value problems for partial differential equations,
problems like the one in \eqref{eqbaslinfracest} are usually understood to mean that the main equation there holds
for all $x$ excluding the boundary. The necessity of this attitude is easily seen here.
Namely, if $f$ belongs to the domain of the generator of a stopped or killed process,
then the value of this generator on $f$ at a boundary point should vanish. Thus only for $g$ vanishing on the boundary
the solution to \eqref{eqbaslinfracest} can belong to the domain of the generator and
satisfy the main equation up to the boundary.
\end{remark}

Furthermore, as is known from stochastic analysis, formula \eqref{eqsolvetwosidgen3} makes sense as
a generalized solution under more general assumptions.
Not going into much detail, let us only menton one particular situation. Namely, one says
 that a continuous function $f(x)$ on $[a,b]$ is a generalized solution to
problem \eqref{eqbaslinfracest} with $g=0$, if $f$ belongs to the domain of the generator of the
semigroup $T_t^{stop}[a,b]$ of the stopped process $X^*_x(t)[a,b;stop]$ on $[a,b]$
(obtained by the closure of the operator $A_{[a,b]\star}$ defined on the domain,
which is a subspace of $C^1[a,b]$ of functions $f$
such that $A_{[a,b]\star}f(a)=A_{[a,b]\star}f(b)=0$) and satisfies $A_{[a,b]\star}f=0$,
or equivalently $T_t^{stop}[a,b]f=f$. The following fact is a consequence
of a general theory of boundary points (here the regularity of the boundary is crucial),
see e. g. Theorem 6.2.3 of \cite{Ko11} for detail.
\begin{theorem}
\label{theorderwelpos1a}
 Under the assumptions of Theorem  \ref{theorderonecapder},
formula \eqref{eqsolvetwosidgen3} supplies a unique generalized solution to problem \eqref{eqbaslinfracest}.
\end{theorem}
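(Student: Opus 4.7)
The plan is to reduce this to an application of the general semigroup framework for boundary value problems of Feller processes (Theorem 6.2.3 of \cite{Ko11}), with the regularity of the boundary points established in Theorem \ref{theorderonecapder}(iii)--(v) playing the crucial role. I would split the candidate solution \eqref{eqsolvetwosidgen3} as $f = h + p$, where
\[
h(x) = f_a\, \P(X_x(\tau)\le a) + f_b\, \P(X_x(\tau)\ge b), \qquad p(x) = -\E \int_0^{\tau} g(X_x(s))\, ds
\]
are the harmonic and potential parts, respectively. Each of these has a direct semigroup interpretation in terms of the stopped process $X^*_x(t)[a,b;stop]$.

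For existence I would verify that $h+p$ belongs to the generalized domain of $A_{[a,b]\star}$ defined in the statement and that $A_{[a,b]\star}(h+p) = g$. For the harmonic part, the strong Markov property makes $h(X^*_x(t \wedge \tau)[a,b;stop])$ a martingale, so $T_t^{stop}[a,b]\, h = h$ for all $t\ge 0$, and hence $h$ lies in the domain with $A_{[a,b]\star} h = 0$. For the potential part, the identity
\[
T_t^{stop}[a,b]\, p(x) - p(x) = -\E \int_0^{t} g(X^*_x(s)[a,b;stop])\, ds
\]
yields, upon division by $t$ and passage to the limit $t\to 0$, the generalized equation $A_{[a,b]\star} p = g$. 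For uniqueness, the difference $\phi$ of two generalized solutions lies in the stopped-semigroup domain, satisfies $A_{[a,b]\star}\phi = 0$ and $\phi(a) = \phi(b) = 0$; then $T_t^{stop}[a,b]\, \phi = \phi$ combined with $\P(\tau<\infty)=1$ (itself a consequence of the boundary regularity) gives $\phi(x) = \E\, \phi(X^*_x(\tau)[a,b;stop]) = 0$.

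The main obstacle is showing that $f$ defined by \eqref{eqsolvetwosidgen3} actually lies in $C[a,b]$ and attains the prescribed boundary values, i.e. $f(x)\to f_a$ as $x\to a_+$ and $f(x)\to f_b$ as $x\to b_-$. This is precisely where the regularity in expectation of the endpoints, established in Theorem \ref{theorderonecapder}(iii)--(v), is essential: regularity gives $\E \tau(x) \to 0$ at the boundary, which forces $\P(X_x(\tau)\le a)\to 1$ as $x\to a_+$ (and symmetrically at $b$), and $H(x,[a,b])\to 0$. Translating these ingredients into the statement about the domain of the closure of $A_{[a,b]\star}$ is handled by the general theorem from \cite{Ko11} cited in the statement, which I would invoke to close the argument rather than redoing the abstract potential-theoretic bookkeeping.
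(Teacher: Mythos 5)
Your proposal is correct and follows essentially the same route as the paper: the paper offers no detailed argument, simply declaring the theorem a consequence of the general theory of boundary points for Feller processes (Theorem 6.2.3 of \cite{Ko11}), with the regularity in expectation of the endpoints from Theorem \ref{theorderonecapder}(iii)--(v) as the crucial input, which is exactly the ingredient you isolate. Your decomposition into harmonic and potential parts and the martingale/semigroup bookkeeping are just the content of that cited general theorem made explicit.
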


Furthermore, to solve problem \eqref{eqbaslinfracestlin} one utilizes the process
\[
f(X^*_x(\tau)[a,b])e^{-\la t} + \int_0^t e^{-\la s} (\la -A_{[a,b]\star}) f(X^*_x(s)[a,b]) \, ds,
\]
which is known (see e. g. \cite{EK})
to be a martingale for any $f\in C^1[a,b]$ (under the conclusions of Theorem  \ref{theorderonecapder}).
Again by the optional sampling theorem it follows that if $f$ solves \eqref{eqbaslinfracest}, then
\begin{equation}
\label{eqsolvetwosidlingen1}
f(x)= \E [f(X^*_x(\tau)[a,b])e^{-\la \tau}]
=f(a) \E [e^{-\la \tau} \1_{X_x(\tau)\le a}] +f(b) \E [e^{-\la \tau} \1_{X_x(\tau)\ge b}].
\end{equation}

Formula \eqref{eqsolvetwosidlingen1} again enures uniqueness of classical solution to \eqref{eqbaslinfracest}
and yields its integral representation in case the expectations involved in the r.h.s. of \eqref{eqsolvetwosidlingen1}
are sufficiently regular functions of $x$.

As an example, let us consider problem \eqref{eqtwosidedfracint} on the interval $[a,b]=[-1,1]$.
The corresponding process $X_x(t)$ of Theorem \ref{theorderwelpos1} is a symmetric
L\'evy motion on $\R$ with index $\be \in (0,1)$. For this process
all ingredients of formula \eqref{eqsolvetwosidgen3} are known, see e. g. \cite{BlGeRa}:
\begin{equation}
\label{eqBlumGet1}
\P (X_x(\tau) \ge 1)=2^{1-\be} \frac{\Ga(\be)}{(\Ga (\be/2))^2} \int_{-1}^x (1-u^2)^{-1+\be/2} \, du,
\end{equation}
and $H(x,dy)$ has the density
\begin{equation}
\label{eqBlumGet2}
H(x,y)=2^{-\be}\pi^{-1/2} \frac{\Ga(1/2)}{(\Ga (\be/2))^2} \int_0^z (u+1) ^{-1/2} u^{\be/2 -1}|x-y|^{\be -1} \, du,
\end{equation}
where
\[
z=(1-x^2)(1-y^2)/(x-y)^2.
\]

Thus \eqref{eqsolvetwosidgen3} yields a solution to problem \eqref{eqtwosidedfracint} in closed form.
Some formulas for exit probabilities are also available for nonsymmetric L\'evy motions, see
\cite{Rog72} and \cite{KypPaWa}, thus yielding explicit solutions to a slightly more general
(compared to \eqref{eqtwosidedfracint}) problem
\begin{equation}
\label{eqtwosidedfracintnonsym}
\al_1D^{\be}_{a+\star}f(x)+\al_2 D^{\be}_{b-\star}f(x)=-g(x), \quad f(a)=f_a, f(b)=f_b,
\end{equation}
with arbitrary positive constants $\al_1, \al_2$.

Similar results hold for the equations on a half-line involving the operators $A_{a\pm \star}$.

\subsection{The case $\be \in (0,1), d>1$}

Let us now consider operator \eqref{eqbaslinfracestlin2} assuming again for simplicity
that the kernel $\nu$ has a density, $\nu(x,y)$, with respect to Lebesgue measure.

The killed processes generated by \eqref{eqgeninterbeta01genmultdimRL} (which are well studied
for L\'evy processes, see e. g. \cite{BoBuCh03}) are generally easier for
analysis than stopped processes. Therefore
we concentrate on the analysis of operator  \eqref{eqgeninterbeta01genmultdimC},
which is more involved.
Let us consider only the case when $D$ is the semi-space $D_b$
or the band $D_{(a,b)}$, see \eqref{eqdefsemisp}, \eqref{eqdefband},
where $A_{D\star}$ is given by \eqref{eqCapsemisp} and \eqref{eqCapband}.

In what follows we have to use a rather ugly additional condition
\begin{equation}
\label{eqCapsemispcondug}
\Om (\ep,x)=\ep\int_{\ep}^{\infty} dy_1 \int_{\R^d} dy_2 \1_{|y|\le 1} \nu (x;y_1,y_2)\frac{y_2^2}{y_1^2}
\le C \om (\ep,x)
\end{equation}
with a constant $C$, where
\[
 \om (\ep,x)= \int_{\ep}^{\infty} dy_1 \int_{\R^d} dy_2 \nu (x;y_1,y_2).
 \]
Its reasonability relies on the fact that it holds for stable-like processes in $\R^{d+1}$ with
\begin{equation}
\label{eqdefstablelikeden}
\nu (x;y)=\frac{a(x)}{|y|^{d+1+\be}}, \quad \be \in (0,1),
\end{equation}
and hence for a variety of standard examples.

In fact, since
\[
\int_{\ep}^{\infty} dy_1 \int_{\R^d} dy_2 \1_{|y|\le 1}
=\int_{\ep}^1 r^d \, dr \int_0^{\arccos (r)} \sin^{d-1} \phi\, d\phi \, dn,
\]
where $dn$ is Lebesgue measure on the unit sphere in $\R^d$
(or just a coefficient $2$ in case $d=1$)
with the total area $|S^{d-1}|$,
 one has, assuming \eqref{eqdefstablelikeden}, that
 \[
 \Om (\ep,x)\le a(x) |S^{d-1}| \ep \int_{\ep}^1 r^{-1-\be} \, dr \int_0^{\arccos (r)} d\phi \frac{\sin \phi}{\cos ^2 \phi}
 \]
 \[
 =a(x) |S^{d-1}| \ep \int_{\ep}^1 r^{-1-\be} \, dr \int_r^1 \frac{dz}{z^2} \le \frac{a(x)}{1+\be} \ep^{-\be}
 \]
 and
 \[
 \om (\ep,x) \ge a(x) |S^{d-1}| \int_{\ep}^1 r^{-1-\be} \, dr \int_0^{\arccos (r)} \sin \phi \, d\phi
 \]
 \[
 =a(x) |S^{d-1}| \ep \int_{\ep}^1 r^{-1-\be} \, dr \int_r^1 dz
 = a(x) \left[ \frac{\ep^{-\be}-1}{\be}-\frac{1-\ep^{1-\be}}{1-\be}\right],
 \]
 implying \eqref{eqCapsemispcondug}.

Extending one-dimensional notations for function spaces used above we shall denote by $C_{\infty}[\bar D]$
 the Banach space of continuous
functions on $\bar D$  vanishing at infinity and by $C^1_{\infty}[\bar D]$ its subspace of functions
with first order partial derivatives belonging to $C_{\infty}[\bar D]$.

\begin{theorem}
\label{theorderonecapdermult}
Assume that $\nu(x;y)=\nu(x_1,x_2;y_1,y_2)$ is a continuous function, which is continuously differentiable
with respect to $x$ and has uniform bounds \eqref{eq1theorderonecapder}
and tightness property \eqref{eq2theorderonecapder}, where the integrals are over $\R^{d+1}$,
and additionally the bound on the second derivative with respect to $x_2$:
\begin{equation}
\label{eq1theorderonecapdersecder}
\quad \sup_x \int |y|\, \left|\frac{\pa^2}{\pa x_2^2}\nu (x_1,x_2;y_1,y_2)\right| \, dy <\infty.
\end{equation}
Moreover, assume that \eqref{eqCapsemispcondug} holds with a constant $C$ and that $\nu (x;y_1,y_2)$
is an even function of $y_2$.

(i) Then, for a semi-space $D=D_b$, the operator $A_{D\star}$ generates a Feller process $X^*_t(x)$ on $\bar D$
and a Feller semigroup on $C_{\infty}[\bar D]$ with invariant core $C^1_{\infty}[\bar D]$.

(ii) If additionally \eqref{eq1theorderonecapdersecder} holds also with
 the integral in $y_1$ taken over $(-\infty, \ep)$,
then also for a band $D=D_{(a,b)}$ the operator $A_{D\star}$
generates a Feller process $X^*_t(x)$ on $\bar D$
and a Feller semigroup on $C_{\infty}[\bar D]$ with invariant core $C^1_{\infty}[\bar D]$.

\end{theorem}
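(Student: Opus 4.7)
My plan is to mirror the proof of Theorem \ref{theorderonecapder}, with the new ingredient---the projection $R_D(x,y)$ appearing in \eqref{eqCapsemisp} and \eqref{eqCapband}---handled through the evenness hypothesis on $\nu$ and the singular-integral bound \eqref{eqCapsemispcondug}. I first approximate $A_{D\star}$ by truncating small jumps, setting $\nu_h(x;y)=\1_{|y|>h}\nu(x;y)$. Each $A_{D\star h}$ is bounded on $C_{\infty}[\bar D]$, so it generates a conservative Feller semigroup $T_t^h$. The objective is uniform-in-$h$ bounds on $\|T_t^h f\|_{C^1_{\infty}[\bar D]}$ for $f\in C^1_{\infty}[\bar D]$; once such bounds are in hand, the telescoping argument of the one-dimensional case (analog of \eqref{eqshortdifference}) produces a limit semigroup $T_t$ on $C_{\infty}[\bar D]$ whose domain contains $C^1_{\infty}[\bar D]$, and re-running the differentiation argument on $T_t$ itself yields strong continuity of $T_t$ on $C^1_{\infty}[\bar D]$, i.e., the invariant-core property.

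The central computation is the evolution equations for the partial derivatives of $T_t^h f$. Differentiating the half-space operator \eqref{eqCapsemisp} in $x_2$ yields $\pa_{x_2}A_{D\star h}f=A_{D\star h}(\pa_{x_2}f)+R_2^{(h)}f$, where $R_2^{(h)}$ involves only $\pa_{x_2}\nu$ and is uniformly bounded by \eqref{eq1theorderonecapder}. Differentiating in $x_1$ is more delicate: the boundary contributions from the moving cutoff $y_1=b-x_1$ cancel between the interior and projection integrals (both integrands reduce to $\nu(x;b-x_1,y_2)[f(b,x_2+y_2)-f(x)]$), and the internal terms reorganize as $A_{D\star h}(\pa_{x_1}f)+R_1^{(h)}f$. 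The remainder $R_1^{(h)}$ collects the $\pa_{x_1}\nu$-terms (uniformly bounded by \eqref{eq1theorderonecapder}) together with the critical projection-slope contribution
\[
-\int_{y_1\ge b-x_1}\nu(x;y)\,\frac{y_2}{y_1}\cdot\nabla_{x_2} f\!\left(b,\,x_2+\tfrac{b-x_1}{y_1}y_2\right)\,dy,
\]
which is the main technical obstacle.

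Using evenness of $\nu$ in $y_2$, the substitution $y_2\mapsto -y_2$ annihilates the integrand obtained by replacing $\nabla_{x_2}f$ with any constant vector; subtracting $\nabla_{x_2}f(x)$ and averaging with the substituted form rewrites the term as
\[
-\tfrac12\int_{y_1\ge b-x_1}\nu(x;y)\,\frac{y_2}{y_1}\cdot\bigl[\nabla_{x_2}f(\pi^{+})-\nabla_{x_2}f(\pi^{-})\bigr]\,dy,\qquad \pi^{\pm}=\bigl(b,\,x_2\pm\tfrac{b-x_1}{y_1}y_2\bigr).
\]
On the tail region $\{|y|>1\}$ this is controlled directly by $\|\nabla_{x_2}f\|_{\infty}$ via \eqref{eq1theorderonecapder}. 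On the singular region $\{|y|\le 1\}$, integrating by parts in $y_2$---equivalently, using the change of variable $u_2=\tfrac{b-x_1}{y_1}y_2$ to relate $y_2$-derivatives at $\pi^{\pm}$ to $x_2$-derivatives, thereby transferring a derivative from $f$ onto $\nu$---reduces the estimate to $(b-x_1)\int\nu(x;y)(y_2^2/y_1^2)\1_{|y|\le1}\,dy=\Om(b-x_1,x)$, which is dominated by $C\om(b-x_1,x)$ by the key hypothesis \eqref{eqCapsemispcondug}. The assumption \eqref{eq1theorderonecapdersecder} on $\pa_{x_2}^2\nu$ is precisely what guarantees that the integrated-by-parts form produces only uniformly bounded terms. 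Since $\om(b-x_1,x)$ is exactly the coefficient of the diagonal killing term in $A_{D\star h}(\pa_{x_1}f)$, the contribution becomes a bounded perturbation of a dissipative generator, yielding the required uniform $C^1$-bound.

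For part (ii), the band $D_{(a,b)}$, the same analysis runs independently at both faces $\{x_1=a\}$ and $\{x_1=b\}$; the supplementary form of \eqref{eq1theorderonecapdersecder} with $y_1$-integral over $(-\infty,\ep)$ is exactly what is needed to repeat the integration-by-parts argument at the lower face, where the projection now lands on $\{x_1=a\}$. The hardest step is the symmetrization-plus-\eqref{eqCapsemispcondug} estimate of the projection-slope term; all other ingredients translate from Theorem \ref{theorderonecapder} by replacing one-dimensional bounds by their $\R^{d+1}$-analogues, and the conclusion that $C^1_{\infty}[\bar D]$ is an invariant core follows verbatim from the one-dimensional argument.
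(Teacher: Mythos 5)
Your architecture is the paper's: truncate small jumps, differentiate the evolution equation in $x_2$ (harmless) and in $x_1$ (where the projection-slope term $-\int \nu(x;y)\,\frac{y_2}{y_1}\,\frac{\pa f}{\pa x_2}\bigl(b,x_2+\frac{b-x_1}{y_1}y_2\bigr)\,dy$ appears), kill its leading part by evenness of $\nu$ in $y_2$, and absorb what remains into the diagonal killing rate $\om(b-x_1,x)$ via \eqref{eqCapsemispcondug}. Two points in your treatment of the critical term do not hold up as written, and they are exactly where the proof lives.

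First, to pass from your symmetrized difference $\nabla_{x_2}f(\pi^+)-\nabla_{x_2}f(\pi^-)$ to the bound $(b-x_1)\,y_2^2/y_1^2$ you need a Lipschitz estimate on $\nabla_{x_2}f$ in the $x_2$-direction, i.e.\ a \emph{uniform-in-$h$ bound on $\pa^2 (T^h_t f)/\pa x_2^2$ along the semigroup}, and this must be proved as a separate preliminary step by differentiating the evolution equation twice in $x_2$ (which is possible precisely because $x_2$-differentiation does not feel the boundary; the remainder terms involve $\pa^2\nu/\pa x_2^2$ and are controlled by \eqref{eq1theorderonecapdersecder}). That is the actual role of hypothesis \eqref{eq1theorderonecapdersecder} in the paper. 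Your proposal instead attributes it to an ``integration by parts in $y_2$ transferring a derivative from $f$ onto $\nu$''; that operation would produce $\pa\nu/\pa y_2$, a derivative in the jump variable that is assumed nowhere, and it does not lead to the $\Om$-bound. The correct mechanism is a Taylor expansion of $\pa f/\pa x_2$ about $y_2=0$ (equivalently your symmetrization plus the mean value theorem), giving $|\phi(x;f)|\le c\,(b-x_1)\int\nu(x;y)\,y_2^2/y_1^2\,dy$ with $c=\sup\|\pa^2 (T^h_s f)/\pa x_2^2\|$, which is finite only after the preliminary second-derivative propagation step.

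Second, the resulting $\phi$ is \emph{not} a bounded perturbation: it is of order $\om(b-x_1,x)$, which blows up as $x_1\to b$. The point of \eqref{eqCapsemispcondug} is that $\phi$ is bounded \emph{relative to} the killing rate, and this is exploited by rewriting the equation for $g=\pa f/\pa x_1$ in mild (Duhamel) form with the weight $e^{-\om(b-x_1,x)(t-s)}$, so that $\int_0^t e^{-\om(t-s)}|\phi|\,ds\le |\phi|/\om\le Cc$ is uniformly bounded. Stating it as a bounded perturbation of a dissipative generator skips the step that actually closes the estimate. With these two corrections your argument coincides with the paper's; part (ii) is then, as you say, the same analysis run at both faces of the band.
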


\begin{proof}
The proof follows the same lines as in Theorem \ref{theorderonecapder} above.
Let us deal only with domain $D_b$.
Approximating $\nu$ by $\nu_h(x;y)=\1_{y_2>h} \nu (x;y)$ we get semigroups $T_h$
on $C_{\infty}(\bar D)$, which are uniformly bounded and preserves twice continuous differentiability
with respect to the second variable $x_2$ and bounds to these derivatives. This holds because
differentiation of the equation $\dot f = A_{\D\star} f$ with respect to $x_2$
 does not feel the boundary so-to-say,
 that is we get
 \[
 \frac{d}{dt} \frac{\pa f}{\pa x_2}
 =   \int_{\R^d} dy_2 \int_{-\infty}^{b-x_1} dy_1 \nu (x;y)[\frac{\pa f}{\pa x_2}(x+y)-\frac{\pa f}{\pa x_2}(x)]
\]
\[
+\int_{\R^d} dy_2 \int_{b-x_1}^{\infty} dy_1 \nu (x;y)[\frac{\pa f}{\pa x_2}\left(b, x_2 +\frac{b-x_1}{y_1} y_2 \right)-\frac{\pa f}{\pa x_2}(x)]
\]
\[
+\int_{\R^d} dy_2 \int_{-\infty}^{b-x_1} dy_1 \frac{\pa \nu}{\pa x_2} (x;y)[f(x+y)-f(x)]
\]
\[
+\int_{\R^d} dy_2 \int_{b-x_1}^{\infty} dy_1 \frac{\pa \nu}{\pa x_2} (x;y)[f\left(b, x_2 +\frac{b-x_1}{y_1} y_2 \right)-f(x)],
\]
and similarly for the second derivative in $x_2$ (with $h$ and without it).
The problem arises when differentiating the
equation $\dot f = A_{\D\star} f$ with respect to $x_1$ yielding the equation
\[
 \frac{d}{dt}g
 = \int_{\R^d} dy_2 \int_{-\infty}^{b-x_1} dy_1 \nu (x;y)[g(x+y)-g(x)]
-g(x)\int_{\R^d} dy_2 \int_{b-x_1}^{\infty} dy_1 \nu (x;y)
\]
\begin{equation}
\label{eq2theorderonecapdermult}
-\int_{\R^d} dy_2 \int_{b-x_1}^{\infty} dy_1 \nu (x;y) \frac{y_2}{y_1}\frac{\pa f}{\pa x_2}\left(b, x_2 +\frac{b-x_1}{y_1} y_2 \right)
\end{equation}
(other terms cancel as in one-dimensional case)
for $g=\pa f/\pa x_1$. Similar equation hods for $\nu_h$ instead of $\nu$.
Because of assumed symmetry of $\nu$ this rewrites as
\begin{equation}
\label{eq3theorderonecapdermult}
 \frac{d}{dt}g(x)
= \int_{\R^d} dy_2 \int_{-\infty}^{b-x_1} dy_1 \nu (x;y)[g(x+y)-g(x)]
-g(x)\om (b-x_1,x) +\phi(x;f)
\end{equation}
with
\[
\om (\ep,x)= \int_{\R^d} dy_2 \int_{\ep}^{\infty} dy_1 \nu (x;y),
\]
\[
\phi(x;f)=-\int_{\R^d} dy_2 \int_{b-x_1}^{\infty} dy_1 \nu (x;y)
\frac{(b-x_1)y_2^2}{y_1^2}\frac{\pa ^2f}{\pa x_2^2}\left(b, x_2 +\theta \frac{b-x_1}{y_1} y_2 \right),
\]
where $\theta \in (0,1)$ so that, for $\pa^2 f/\pa x^2$ bounded by a constant $c$,
$\phi$ is bounded:
\[
|\phi (x;f)|\le c(b-x_1)\int_{\R^d} dy_2 \int_{b-x_1}^{\infty} dy_1 \nu (x;y)
\frac{y_2^2}{y_1^2}.
\]
Equation \eqref{eq3theorderonecapdermult} rewrites in the mild form as
\begin{equation}
\label{eq4theorderonecapdermult}
 g_t(x)=g_0(x)+\int_0^t e^{-\om (b-x_1,x)(t-s)} A g_s(x) \, ds +\int_0^t  e^{-\om (b-x_1,x)(t-s)} \phi (x;f) \, ds,
\end{equation}
where
\[
Ag(x)=\int_{\R^d} dy_2 \int_{-\infty}^{b-x_1} dy_1 \nu (x;y)[g(x+y)-g(x)].
\]
By \eqref{eqCapsemispcondug}, the last term in \eqref{eq4theorderonecapdermult} is uniformly bounded,
and hence equation  \eqref{eq4theorderonecapdermult} and its versions with $\nu_h$ instead of $\nu$
have uniformly bounded solutions for bounded $g_0$. Hence we can now complete the proof as in Theorem \ref{theorderonecapder}.
\end{proof}

One can now get a direct multi-dimensional version of Theorem \ref{theorderwelpos1}
for the boundary value problems
\begin{equation}
\label{eq5theorderonecapdermult}
A_{D\star} f= g, \quad f|_{\pa D}=\phi,
\end{equation}
with $g$ in $D$ and $\phi $ on $\pa D$ given, which represent the simplest multidimensional analogs
of linear equations with the Caputo derivatives.
Alternatively, one can also analyze such problems
via the reduction to killed processes (that is, to the analogs of RL derivatives),
 see Remark \ref{remRLClink}.

 We shall not go into detail of general domains $D$ here, but note that the problem
\begin{equation}
\label{eq6theorderonecapdermult}
A_{D\star} f= \la f+ g, \quad f|_{\pa D}=\phi,
\end{equation}
for $A$ generating a stable L\'evy process in $\R^d$ and $D$ the ball $D=\{y\in \R^d: |y|<r\}$
can be solved explicitly, using multidimensional extensions of formulas
\eqref{eqBlumGet1} and \eqref{eqBlumGet2} given also in \cite{BlGeRa}.

\subsection{The case $\be \in (1,2)$}

For the case $\be\in (0,1)$ above we constructed the interrupted process on its own
and then look at its stopping, which is quite natural. However, as we noted, the boundary-value
problems (at least in one-dimensional case) for corresponding operators can
be expressed in terms of the initial process stopped at the boundary. We shall follow this
approach here, as the study of interrupted process becomes rather subtle.

Let us reduce our attention to one-dimensional processes only generated by the operators
\[
Af(x)=\int [f(x+y-f(x)-yf'(x)]\nu (x,y) \, dy
\]
with the density $\nu$ satisfying
\begin{equation}
\label{eqpropdenlevy1}
\sup_x \int_{\R} (|y|\land |y|^2) \nu (x, y) \, dy <\infty.
\end{equation}
The question of whether such an operator generates a uniquely defined process
is non-trivial already in the case of this simple $A$, which can be looked at as the fully mixed-order
fractional derivative. To go ahead, we shall use additional
assumptions of regularity and monotonicity.
The following statement is a particular case of Theorem 4.1 of \cite{Ko11a}:

\begin{prop}
\label{propfrommonrus}
Assume that $\nu$ is twice continuously differentiable
with respect to the first variable satisfying
\begin{equation}
\label{eqpropdenlevy2}
\sup_x \int (|y|\land |y|^2) |\frac{\pa}{\pa x} \nu (x,y)| \, dy <\infty, \quad
\sup_x \int (|y|\land |y|^2) |\frac{\pa^2}{\pa x^2} \nu (x, y)| \, dy <\infty,
\end{equation}
and that the functions
\begin{equation}
\label{eqcondmonotonlevymeasure}
\int_a^{\infty} \nu (x,y), \, dy \quad \int_{-\infty}^{-a} \nu (x,y) \, dy
\end{equation}
are non-decreasing and non-increasing respectively for any $a>0$.
Then the operator $A$ generates a Feller process $X_t(x)$ on $\R$
and a Feller semigroup with the space $C^2_{\infty}(\R)$ being an invariant core. The process $X_t(x)$
is stochastically monotone (but we will not use this latter fact).
\end{prop}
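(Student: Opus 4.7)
The plan is to follow the cutoff-and-approximate strategy already used in the proof of Theorem \ref{theorderonecapder}, but now adapted to the order-in-$(1,2)$ setting, where the required control of derivatives is subtler because one needs two derivatives (rather than one) to make sense of the compensated integrand. First I would introduce the cutoff kernels $\nu_h(x,y)=\1_{|y|\ge h}\nu(x,y)$ and set
\[
A_h f(x)=\int_{\R}[f(x+y)-f(x)-yf'(x)]\nu_h(x,y)\,dy.
\]
Since $\sup_x\int(|y|\wedge|y|^2)\nu_h(x,y)\,dy<\infty$ and $\nu_h$ kills small jumps, the operator $A_h$ is a bounded perturbation of a drift term on $C_\infty(\R)$ and generates a conservative Feller semigroup $T_t^h$ there by standard perturbation theory.

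Next, the key step is to show that $T_t^h$ preserves $C^2_\infty(\R)$ with uniform bounds in $h$. For this I would differentiate the equation $\dot f=A_h f$ once and twice in $x$. Writing $g=f'$, the equation satisfied by $g$ has the form
\[
\dot g(x)=\int[g(x+y)-g(x)]\nu_h(x,y)\,dy + \int[f(x+y)-f(x)-yf'(x)]\frac{\pa \nu_h}{\pa x}(x,y)\,dy,
\]
and analogously one differentiates again to get an equation for $f''$. The first term on the right is a conditionally positive operator with zero diagonal, hence generates a contraction in $C_\infty$. The second term is where the monotonicity assumption \eqref{eqcondmonotonlevymeasure} enters: after integrating by parts in $y$ one rewrites the integral against $\pa_x\nu_h$ in terms of integrals against the tail functions $\int_a^\infty \nu(x,y)\,dy$ and $\int_{-\infty}^{-a}\nu(x,y)\,dy$, whose derivatives in $x$ have definite signs. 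Combined with the second derivative bound \eqref{eqpropdenlevy2} and tightness, this yields a uniform in $h$ Gronwall-type inequality for $\|(T_t^h f)''\|$ on compact time intervals, hence a uniform bound on $\|T_t^h f\|_{C^2_\infty}$ for $f\in C^2_\infty$.

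With the uniform $C^2$-bound in hand, the passage $h\to 0$ mirrors the argument in Theorem \ref{theorderonecapder}: for $h_1>h_2$,
\[
(T_t^{h_1}-T_t^{h_2})f=\int_0^t T_{t-s}^{h_2}(A_{h_1}-A_{h_2})T_s^{h_1}f\,ds,
\]
and one estimates $|(A_{h_1}-A_{h_2})u(x)|\le \tfrac12\|u''\|\int_{h_2\le|y|\le h_1}y^2\,\nu(x,y)\,dy=o(1)\|u\|_{C^2_\infty}$ as $h_1\to 0$, by tightness of $y^2\nu$ at the origin (which follows from \eqref{eqpropdenlevy1}). Hence $T_t^h f$ converges in $C_\infty(\R)$ to a strongly continuous semigroup $T_t$, and rerunning the differentiation argument on the limit shows $C^2_\infty(\R)$ is an invariant core. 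Finally, the Feller property (positivity preservation and contractivity) passes to the limit since each $T_t^h$ enjoys it.

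The main obstacle is step two: extracting the uniform $C^2$-bound on $T_t^h$. The danger is that $\pa_x\nu(x,y)$ may have no definite sign and, being integrated against the compensated increment $f(x+y)-f(x)-yf'(x)$ (which is only $O(y^2)$ near zero but $O(1)$ at infinity), could in principle produce a term growing unboundedly in time. The monotonicity hypothesis \eqref{eqcondmonotonlevymeasure} is precisely what is needed to represent this bad term, after integration by parts, as a positivity-preserving contribution plus a controlled remainder, so that it does not destroy the Gronwall loop. Verifying this carefully—keeping track of the sign produced by the monotone tails and showing the residual pieces are absolutely integrable against the bounds in \eqref{eqpropdenlevy2}—is the technical heart of the argument.
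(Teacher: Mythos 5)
First, note that the paper itself does not prove Proposition \ref{propfrommonrus}: it is quoted as a particular case of Theorem 4.1 of \cite{Ko11a}, so any complete argument here is "different from the paper" by default. Your route --- cutoff kernels $\nu_h$, uniform a priori bounds on derivatives of $T^h_t f$, removal of the cutoff as in Theorem \ref{theorderonecapder} --- is the natural one and is in the spirit of the cited reference, and you have correctly located where \eqref{eqcondmonotonlevymeasure} must enter. To make that precise: writing $f(x+y)-f(x)-yf'(x)=\int_0^y[g(x+z)-g(x)]\,dz$ with $g=f'$ and applying Fubini, the dangerous term $\int [f(x+y)-f(x)-yf'(x)]\,\pa_x\nu(x,y)\,dy$ in the equation for $g$ becomes $\int_0^\infty[g(x+z)-g(x)]\,\pa_x G_+(x,z)\,dz-\int_{-\infty}^0[g(x+z)-g(x)]\,\pa_x G_-(x,z)\,dz$ with $G_+(x,z)=\int_z^\infty\nu(x,y)\,dy$ and $G_-(x,z)=\int_{-\infty}^{z}\nu(x,y)\,dy$; monotonicity makes both weights nonnegative, so this is a conditionally positive operator of order at most one acting on $g$ (its kernel integrates $1\wedge|z|$ by \eqref{eqpropdenlevy2}), not a source term. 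This is essential rather than cosmetic: estimated crudely, that term is of order $\|f''\|$ in the equation for $f'$ and of order $\|f'''\|$ in the equation for $f''$, so the Gronwall loop does \emph{not} close at $C^2$ without the monotone rewriting. Your sketch names this as the technical heart but only asserts the closure; carrying out the rewriting for both the first and the second derivative equations (where \eqref{eqpropdenlevy2} controls the genuine remainder by $\|f\|_{C^2}$) is the proof. Also, your displayed equation for $\dot g$ drops the compensator $-yg'(x)$ in the first integral and the term coming from differentiating the drift $\int y\,\nu_h(x,y)\,dy$; these must be restored before the estimate is run.

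One step is wrong as stated: the bound $|(A_{h_1}-A_{h_2})u(x)|\le\tfrac12\|u''\|\int_{h_2\le|y|\le h_1}y^2\nu(x,y)\,dy=o(1)$ \emph{uniformly in} $x$ does not follow from \eqref{eqpropdenlevy1}. Uniform boundedness of $\sup_x\int(|y|\wedge y^2)\nu(x,y)\,dy$ does not imply $\sup_x\int_{|y|\le\de}y^2\nu(x,y)\,dy\to0$ as $\de\to0$ (a fixed amount of mass may concentrate at an $x$-dependent scale tending to zero); this is precisely why Theorem \ref{theorderonecapder} imposes the tightness condition \eqref{eq2theorderonecapder} as a hypothesis separate from the bound \eqref{eq1theorderonecapder}. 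To repair the passage $h\to0$ you must either add the analogous uniform tightness assumption for $y^2\nu$ near the origin, or avoid the uniform operator-norm estimate altogether, e.g.\ by combining the uniform $C^2$ bounds with pointwise convergence of the generators on $C^2_\infty(\R)$ and a Trotter--Kato/core argument. As written, the limit step has a hole.
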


Next let $-\infty < a <x < b <\infty$ and let
\begin{equation}
\label{eqgeninterboundintbelarge}
\tilde A_{[a,b]}f(x)= \int_{a-x}^{b-x}  [f(a\vee [(x+y) \wedge b])-f(x)-yf'(x)] \nu (x, y) \, dy
\end{equation}
be the operator representing the corresponding process $X^*_x(t)[a,b]$ interrupted on an attempt to cross
the boundary of $[a,b]$ (the processes on $(-\infty, b]$ or $[a,\infty)$ with a one-sided boundary
are considered analogously and will not be looked at) and
\begin{equation}
\label{eqgeninterboundintbelargeCap}
A_{[a,b]\star}f(x)= \tilde A_{[a,b]}f(x)-f'(b) \int_{b-x}^{\infty} [(b-x)-z]\nu(x, z)\, dz
-f'(a)\int_{-\infty}^{a-x} [(a-x)-z]\nu (x,z) \, dz
\end{equation}
the corresponding analog of Caputo's derivative (see \eqref{eqlevyneginterabreg}).

As above, Proposition \ref{propfrommonrus}  allows us to apply the standard tools of stochastic calculus.
Namely, let $f \in C^2[a,b]$ such that $f'(a)=f'(b)=0$. Then we can continue it to all $\R$ by setting
$f(x)=f(b)$ for $x>b$ and $f(x)=f(a)$ for $x<a$ and it will become a bounded continuously differentiable function on $\R$.
Denoting as above by $\tau $ the exit time from $(a,b)$, that is $\tau=\inf\{t: X_t(x) \notin (a,b)\}$ and
applying to $f$ Dynkin's martingale we get again \eqref{eqsolvetwosidgen2},
or else \eqref{eqsolvetwosidgen3}, using the kernel $H(x,dy)$ defined by \eqref{eqdefoccuptllexit}
and assuming $f$ solves problem
\eqref{eqbaslinfracestrep2} with $\la =0$.

\begin{remark}
Actually we can use Dynkin's martingale only
for twice continuously differentiable functions, and our (extended) $f$ may have discontinuities of the second derivatives
 on the boundary, but this can be settled via approximation, as the final expression \eqref{eqsolvetwosidgen3}
 does not involve the second derivative of $f$ on the boundary.
\end{remark}

Therefore we get the following version of Theorem \ref{theorderwelpos1} for the present case $\be \in (1,2)$:
 \begin{theorem}
\label{theorderwelpos1belarge}
(i) Under the assumptions of Proposition  \ref{propfrommonrus} problem
\eqref{eqbaslinfracestrep2} with $\la =0$
can have at most one classical solution. If the probabilities $\P (X_x(\tau)\le a)$, $\P (X_x(\tau)\ge b)$ are
functions from $C^2[a,b]$ and the measure $H(x,dy)$ is continuously weakly differentiable in $x$ (so that the integral
on the r.h.s. of \eqref{eqsolvetwosidgen3} belongs to $C^2[a,b]$ for any $g\in C[a,b]$),
formula \eqref{eqsolvetwosidgen3} supplies the unique classical solution to \eqref{eqbaslinfracestrep2} with $\la =0$.

(ii) Generally under the assumptions of Proposition  \ref{propfrommonrus},
formula \eqref{eqsolvetwosidgen3} supplies a unique generalized solution to \eqref{eqbaslinfracestrep2} with $\la =0$.
\end{theorem}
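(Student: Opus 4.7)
The plan is to follow the template of Theorem~\ref{theorderwelpos1} for the case $\be\in(0,1)$, with the extra work necessitated by the jumps of order bigger than one and by the fact that classical solutions must satisfy $f'(a)=f'(b)=0$.

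First, Proposition~\ref{propfrommonrus} supplies a Feller process $X_x(t)$ on $\R$ with generator $A$ defined on the invariant core $C^2_\infty(\R)$. From this I form the exit time $\tau=\inf\{t\ge 0:X_x(t)\notin(a,b)\}$ and the occupation-till-exit kernel $H(x,dy)$ by \eqref{eqdefoccuptllexit}. Dynkin's martingale applied to a function $h\in C^2_\infty(\R)$ and stopped at $\tau$ (combined with Doob's optional sampling) gives the identity
\[
h(x)=\E\, h(X_x(\tau))-\E\int_0^{\tau}(Ah)(X_x(s))\,ds.
\]
This is the basic identity I will exploit throughout.

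For uniqueness in (i), suppose $f\in C^2[a,b]$ is a classical solution of \eqref{eqbaslinfracestrep2} with $\la=0$. As observed before the statement of the theorem, a bounded value of $\tilde A_{[a,b]}f$ near $x=a$ or $x=b$ forces $f'(a)=f'(b)=0$; indeed $\tilde A_{[a,b]}f(x)$ contains the singular term $f'(x)\int_{\R\setminus(a-x,b-x)}z\,\nu(x,z)\,dz$ in its representation \eqref{eqlevyneginterab}, and under \eqref{eqpropdenlevy1} the only way for this to stay bounded as $x\to a_+$ (resp.~$x\to b_-$) is $f'(a)=0$ (resp.~$f'(b)=0$). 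I then extend $f$ to all of $\R$ by setting $f\equiv f(a)$ on $(-\infty,a]$ and $f\equiv f(b)$ on $[b,\infty)$. The extended function is bounded and $C^1$ on $\R$, and on $(a,b)$ one checks directly that $\tilde A_{[a,b]}f(x)=Af(x)$, where on the right-hand side $A$ is applied to the extended $f$. Applying Dynkin's identity to the extended $f$ yields \eqref{eqsolvetwosidgen2}, and, using the definition \eqref{eqdefoccuptllexit} of $H(x,dy)$, the representation \eqref{eqsolvetwosidgen3}. Uniqueness follows because the right-hand side of \eqref{eqsolvetwosidgen3} is determined solely by the data $f_a,f_b,g$.

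The main obstacle is the justification of Dynkin's formula, because the extended $f$ is only $C^1$ (with a jump in the second derivative at $a$ and $b$), while $A$ is a priori defined on $C^2_\infty(\R)$. I will handle this by an approximation argument: smoothing $f$ on a shrinking neighborhood of $\{a,b\}$ produces $f_\ep\in C^2_\infty(\R)$ with $f_\ep\to f$ in $C^1$ and $Af_\ep\to\tilde A_{[a,b]}f$ boundedly on $(a,b)$; this is sufficient, since the final formula \eqref{eqsolvetwosidgen3} involves only $f_a$, $f_b$, and the integral of $g$ against $H(x,dy)$, none of which depend on the second derivatives of $f$ at the boundary. For existence in (i), I take the right-hand side of \eqref{eqsolvetwosidgen3} as a candidate $f$; by the hypothesis this candidate lies in $C^2[a,b]$, so Dynkin's identity applies rigorously and the computation is reversible, showing that $\tilde A_{[a,b]}f=g$ on $(a,b)$ with the prescribed boundary values. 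Part (ii) then follows from the standard theory of stopped Markov processes and the associated semigroup $T_t^{stop}[a,b]$, exactly as in Theorem~\ref{theorderwelpos1a}: the function given by \eqref{eqsolvetwosidgen3} belongs to the domain of the generator of $T_t^{stop}[a,b]$, is invariant under $T_t^{stop}[a,b]$ in the $g=0$ case, and coincides with the unique generalized solution in the general case via the resolvent identity $(\la-A_{[a,b]\star})^{-1}$ evaluated at $\la=0$.
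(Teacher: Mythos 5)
Your proposal is correct and follows essentially the same route as the paper: force $f'(a)=f'(b)=0$ from boundedness of $\tilde A_{[a,b]}f$, extend $f$ constantly outside $[a,b]$ to a bounded $C^1$ function on which $A$ agrees with $\tilde A_{[a,b]}$ on $(a,b)$, apply Dynkin's martingale and optional sampling (settling the missing second derivative at $a,b$ by approximation, exactly as in the paper's remark) to obtain \eqref{eqsolvetwosidgen3}, and then read off uniqueness, existence under the stated regularity, and the generalized-solution statement as in Theorems \ref{theorderwelpos1} and \ref{theorderwelpos1a}. No substantive differences from the paper's argument.
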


As was noted solutions to  \eqref{eqbaslinfracestrep2} solve also \eqref{eqbaslinfracestrep1} under the
additional assumption that the first derivative of the solution vanishes on the boundary, which yields
a rather tamed (and expected) non-uniqueness for \eqref{eqbaslinfracestrep1}.

\section{Appendix}

For completeness we deduce here the expressions \eqref{eqRLder0}, \eqref{eqCder0}
and \eqref{eqRLder1}, \eqref{eqCder1} from the original definitions.

For $\be \in (0,1)$ and $x>a$ integration by parts yields
\[
I^{1-\be}_{a+}f(x)=\frac{1}{\Ga (1-\be)}\int_a^x (x-t)^{-\be}f(t) \, dt
=-\frac{1}{\Ga (1-\be)} \int_a^x \frac{d}{dt} \left[\frac{(x-t)^{1-\be}}{1-\be}\right]f(t) \, dt
\]
\[
=\frac{1}{\Ga (1-\be)} \left[\frac{(x-a)^{1-\be}}{1-\be}\right]f(a)
+\frac{1}{\Ga (1-\be)} \int_a^x \left[\frac{(x-t)^{1-\be}}{1-\be}\right]f'(t) \, dt,
\]
so that
\begin{equation}
\label{eqcalRLder}
D^{\be}_{a+}f(x)=\frac{d}{dx} I_{a+}^{1-\be}f(x)
=\frac{f(a)}{\Ga (1-\be)(x-a)^{\be}}
+\frac{1}{\Ga (1-\be)} \int_a^x (x-t)^{-\be}f'(t) \, dt.
\end{equation}
Another integration by parts using
\[
f'(t) =\frac{d}{dt} (f(t)-f(x)),
\]
yields
\[
D^{\be}_{a+}f(x)
=\frac{f(a)}{\Ga (1-\be)(x-a)^{\be}}-\frac{f(a)-f(x)}{\Ga (1-\be)(x-a)^{\be}}
-\frac{\be}{\Ga (1-\be)} \int_a^x \frac{f(t)-f(x)}{(x-t)^{1+\be}} \, dt,
\]
which equals to the r.h.s. of \eqref{eqRLder0}.
On the other hand,
\[
D^{\be}_{a+\star}f(x)=I_{a+}^{1-\be}f'(x)
=\frac{1}{\Ga (1-\be)}\int_a^x (x-t)^{-\be}f'(t) \, dt,
\]
which differs from \eqref{eqcalRLder} by $f(a)(x-a)^{-\be} / \Ga (1-\be)$ yielding \eqref{eqCder0},
\eqref{eqRLCapder}.

For $\be \in (1,2)$ and $x>a$ one has
\begin{equation}
\label{eqcalRLder1}
D^{\be}_{a+\star}f(x)
=\frac{1}{\Ga (2-\be)}\int_a^x (x-t)^{1-\be }f''(t) \, dt,
\end{equation}
which rewrites as
\[
=\frac{1}{\Ga (2-\be)}(x-a)^{1-\be }(f'(x)-f'(a)) +\frac{1-\be}{\Ga (2-\be)}\int_a^x (x-t)^{-\be }(f'(t)-f'(x))\, dt.
\]
Another integration by parts using
\[
f'(t)-f'(x)=\frac{d}{dt}(f(t)-f(x)-(t-x) f'(x))
\]
yields
\[
D^{\be}_{a+\star}f(x)
= \frac{1}{\Ga (2-\be)}(x-a)^{1-\be }(f'(x)-f'(a))
\]
\[
-\frac{1}{\Ga (1-\be)}(x-a)^{-\be }(f(a)-f(x)-(a-x)f'(x))
+\frac{1}{\Ga (-\be)}\int_a^x \frac{f(t)-f(x)-(t-x) f'(x)}{(x-t)^{1+\be} } \, dt,
\]
which equals the r.h.s. of \eqref{eqCder1}.

On the other hand, again
for $\be \in (1,2)$ and $x>a$,
\[
I^{2-\be}_{a+}f(x)=\frac{1}{\Ga (2-\be)}\int_a^x (x-t)^{1-\be }f(t) \, dt,
\]
which rewrites by integration by parts as
\[
I^{2-\be}_{a+}f(x)=\frac{f(a)}{\Ga (2-\be)}\frac{(x-a)^{2-\be }}{2-\be}
+\frac{1}{\Ga (2-\be)}\int_a^x \frac{(x-t)^{2-\be }}{2-\be} f'(t) \, dt,
\]
and by yet another integration by parts as
\[
I^{2-\be}_{a+}f(x)=\frac{f(a)}{\Ga (2-\be)}\frac{(x-a)^{2-\be }}{2-\be}
+\frac{f'(a)(x-a)^{3-\be }}{\Ga (2-\be)(2-\be)(3-\be)}
+\int_a^x \frac{(x-t)^{3-\be }f''(t)}{\Ga (2-\be)(2-\be)(3-\be)} \, dt.
\]
Consequently,
\[
D^{\be}_{a+}f(x)=\frac{d^2}{dx^2}I^{2-\be}_{a+}f(x)
=\frac{1-\be}{\Ga (2-\be)}(x-a)^{-\be }f(a)
+\frac{f'(a)(x-a)^{1-\be }}{\Ga (2-\be)}
+\int_a^x \frac{(x-t)^{1-\be }f''(t)}{\Ga (2-\be)} \, dt.
\]
Comparing this with \eqref{eqcalRLder1} yields
\eqref{eqRLder1} and \eqref{eqRLCder2}.

Similarly, for $\be \in (1,2)$ and $x<a$ one has by definition \eqref{eqdefCderleft2} that
\[
D^{\be}_{a-\star}f(x)
=\frac{1}{\Ga (2-\be)}\int_x^a (t-x)^{1-\be }f''(t) \, dt
\]
\[
=-\frac{1}{\Ga (2-\be)}(a-x)^{1-\be }(f'(x)-f'(a)) -\frac{1-\be}{\Ga (2-\be)}\int_x^a (t-x)^{-\be }(f'(t)-f'(x))\, dt.
\]
By integration by parts this rewrites as
\[
D^{\be}_{a-\star}f(x)
=- \frac{1}{\Ga (2-\be)}(x-a)^{1-\be }(f'(x)-f'(a))
\]
\[
-\frac{1}{\Ga (1-\be)}(a-x)^{-\be }(f(a)-f(x)-(a-x)f'(x))
+\frac{1}{\Ga (-\be)}\int_x^a \frac{f(t)-f(x)-(t-x) f'(x)}{(t-x)^{1+\be} } \, dt,
\]
which equals the r.h.s. of \eqref{eqCder1left}.
Similarly, \eqref{eqRLder1left} is obtained.

\vspace{2mm}

{\bf Acknowledgements.} I am grateful to J.  Lorinczi, G. Pagnini and E. Scalas for inviting me 
to deliver a mini-course on probabilistic treatment of fractional differential equations in 
a school organized by the Basque Center for Applied Mathematics (BCAM) in Bilbao
(November 2014), as well as to D. Chamorro and S. Menozzi for inviting me to give a talk on this subject 
 on a workshop 'Analyse et Probabilit\'es' (Universit\'e d'Evry Val d'Essonne, October 2014).
 These stimulating events gave me the nice opportunity to think systematically about the topic 
 of the present paper.

\end{document}